\newtheorem{thm}{Theorem}[section]
\newtheorem{lma}[thm]{Lemma}
\newtheorem{ass}[thm]{Assumption}
\newtheorem{prop}[thm]{Proposition}
\newtheorem{exam}[thm]{Example}
\newcommand{\R}{\mathbb{R}}
\newcommand{\tr}{\textnormal{tr}}
\newcommand{\N}{\mathbb{N}}
\newcommand{\C}{\mathbb{C}}
\providecommand{\norm}[1]{\lVert#1\rVert}
\renewcommand{\geq}{\geqslant}
\renewcommand{\leq}{\leqslant}
\renewcommand{\epsilon}{\varepsilon}
\renewcommand{\P}{\mathbb{P}}
\renewcommand{\l}{\text{loc}}
\renewcommand{\i}{\mathbf{i}}
\newcommand{\id}{\textnormal{Id}}
\renewcommand{\geq}{\geqslant}
\renewcommand{\leq}{\leqslant}
\providecommand{\A}{\mathcal{A}}
\renewcommand{\i}{\mathbf{i}}
\providecommand{\I}{\mathcal{I}}
\renewcommand{\u}{\mathbf{u}}
\renewcommand{\l}{\mathcal{L}}
\providecommand{\x}{\mathbf{x}}
\providecommand{\y}{\mathbf{y}}
\providecommand{\p}{\mathbf{p}}
\providecommand{\i}{\mathbf{i}}
\providecommand{\norm}[1]{\lVert#1\rVert}
\providecommand{\one}{\mathbf{1}}
\renewcommand{\c}{\mathcal{C}_a}
\DeclareMathOperator\Arg{Arg}
\begin{document}

\title[]{Effective estimates on the top Lyapunov exponents for random matrix products}

\author{Natalia Jurga and Ian Morris}
\address{Natalia Jurga: Department of Mathematics, University of Surrey, Guildford, GU2 7XH, UK}
\email{N.Jurga@surrey.ac.uk}
\address{Ian Morris: Department of Mathematics, University of Surrey, Guildford, GU2 7XH, UK}
\email{I.Morris@surrey.ac.uk }

\date{\today}

\subjclass[2010]{}

\begin{abstract}
We study the top Lyapunov exponents of random products of positive $2 \times 2$ matrices and obtain an efficient algorithm for its computation. As in the earlier work of Pollicott \cite{pol}, the algorithm is based on the Fredholm theory of determinants of trace-class linear operators. In this article we obtain a simpler expression for the approximations which only require calculation of the eigenvalues of finite matrix products and not the eigenvectors. Moreover, we obtain effective bounds on the error term in terms of two explicit constants: a constant which describes how far the set of matrices are from all being column stochastic, and a constant which measures the minimal amount of projective contraction of the positive quadrant under the action of the matrices.

\end{abstract}

\keywords{}
\maketitle

\section{Introduction}\label{intro}

Let $\A=\{A_1, \ldots, A_k\}$ be a finite set of invertible $d \times d$ real matrices. Let $\I=\{1, \ldots, k\}$, $\Sigma=\I^{\N}$, $\p=(p_1, \ldots, p_k)$ be a probability vector and $\mu_{\p}$ be the Bernoulli measure on $\Sigma$ associated to $\p$. The \emph{Lyapunov exponent} of $\mu_{\p}$ is given by the limit
$$\Lambda(\A,\p)=\Lambda= \lim_{n \to \infty} \frac{1}{n} \int \log \norm{A_{i_1} \cdots A_{i_n}} \textup{d}\mu_{\p}(\i)$$
where $\i=(i_n)_{n \in \N} \in \Sigma$ and $\norm{\cdot}$ denotes any matrix norm. By the sub-additive ergodic theorem, for $\mu_{\p}$ almost every $\i \in \Sigma$,
$$\Lambda= \lim_{n \to \infty} \frac{1}{n} \log \norm{A_{i_1} \cdots A_{i_n}},$$
a result which was first established by Furstenberg and Kesten in \cite{fk}. The precise estimation of the top Lyapunov exponent -- which is to say, the evaluation of the above limit -- has been a problem of noted interest and difficulty from its first appearance to the present day (see for example \cite{bai,cpv,ki,ma,pol,tsbl}).

In this article we will be interested in obtaining \emph{effective} estimates on the Lyapunov exponent $\Lambda$. We make the following assumption on our set of matrices. 

\begin{ass}
$\{A_1, \ldots, A_k\}$ is a finite set of $2 \times 2$ positive invertible matrices. We denote the entries of $A_i$ by
$$A_i= \begin{pmatrix} a_i & b_i \\ c_i& d_i \end{pmatrix}.$$
We also assume that not all of the matrices are column stochastic, meaning that
\begin{eqnarray}
C_1=\max_{1 \leq i \leq k} \left\{a_i+c_i, b_i+d_i, \frac{1}{a_i+c_i}, \frac{1}{b_i+d_i}\right\} >1.\label{c1} \end{eqnarray}
\label{ass}
\end{ass}

The positivity assumption in Assumption \ref{ass} ensures that the matrices map the positive quadrant strictly inside itself.  We lose no generality by assuming (\ref{c1}) since whenever $C_1=1$ it follows that $\Lambda=0$, see section \ref{op}.

In \cite{pol} Pollicott constructed an algorithm for estimating the Lyapunov exponent of a finite set of positive matrices by studying $k$ analytic families of operators $\{L_{1,t}\}_{t \in \C}, \ldots, \{L_{k,t}\}_{t \in \C}$, which we shall denote collectively as $\{L_{j,t}\}$. He showed that $\Lambda= \sum_{j=1}^k p_j \frac{\textup{d}}{\textup{d}t} \lambda_1(L_{j,t}) \bigr|_{t=0}$ where $\lambda_1(L_{j,t})$ denotes the top eigenvalue of the operator $L_{j,t}$, and thus reduced the problem to extracting an estimate on the eigenvalues $\lambda_1(L_{j,t})$ for $t \in \C$ close to 0. In the same way that the eigenvalues of a matrix can be extracted by studying the roots of the characteristic polynomial of a matrix, Pollicott applied Grothendieck's theory of determinants on Banach spaces to show that the eigenvalues of $L_{j,t}$ appeared as the roots of the Fredholm determinant $\det(\id-z L_{j,t})$. He then showed that the determinant function, being an entire function of $z$, had coefficients which could be computed explicitly by considering finite matrix products, leading to the following non-effective approximation of $\Lambda$. This approach to the spectra of transfer operators was pioneered by D. Ruelle in \cite{ruelle}.

With slight abuse of notation, for any matrix $A=A_{i_1} \cdots A_{i_n} \in \A^n$ we define $p_{A}=p_{i_1} \cdots p_{i_n}$. 

\begin{thm}[Pollicott \cite{pol}]
For each $n \geq 1$ and $i \in \{1, \ldots, k\}$ define
$$\tau_{n,i}= \sum_{A \in \A^n} p_{A}\left(1-\frac{\lambda_2(A)}{\lambda_1(A)}\right)^{-1}\sum_{j=0}^{n-1}\log\left(\frac{\norm{A_i v_{A,j}}}{\norm{v_{A,j}}}\right)$$
and
$$t_n=\sum_{A \in \A^n} p_{A}\left(1-\frac{\lambda_2(A)}{\lambda_1(A)}\right)^{-1}$$
where for $A=A_{i_1} \cdots A_{i_n}$, $v_{A,j}$ corresponds to the leading eigenvector of $A_{i_{j+1}} \cdots A_{i_n} A_{i_1} \cdots A_{i_j}$. Define
$$\alpha_{n,i}= \sum_{l=1}^n \frac{(-1)^l}{l!} \sum_{n_1+ \ldots +n_l=n} \sum_{j=1}^l \frac{\tau_{n_j,i}}{n_j} \prod_{1 \leq m \leq l, m \neq j} \frac{t_{n_m}}{n_m}$$
and
$$a_n= \sum_{l=1}^n \frac{(-1)^l}{l!} \sum_{n_1+ \ldots +n_l=n} \prod_{i=1}^l \frac{t_{n_i}}{n_i}.$$
Then 
$$\Lambda_N=\frac{\sum_{n=1}^N \sum_{i=1}^k \alpha_{n,i}}{\sum_{n=1}^N na_n}$$
satisfies $|\Lambda_N-\Lambda|=O(\exp(-\gamma N^2))$ for some $\gamma>0$. \label{thm pol}
\end{thm}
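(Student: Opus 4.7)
The plan is to realise the family $\l_{j,t}$ as nuclear transfer operators on a Hilbert space of holomorphic functions on an open disk $D\subset\C$ that contains the projectivised positive quadrant and satisfies $\overline{\bigcup_i\phi_i(D)}\subset D$, where $\phi_i$ denotes the Möbius action of $A_i$ on $\mathbb{P}^1$. Such a $D$ exists because Assumption~\ref{ass} forces each $\phi_i$ to be a uniform holomorphic contraction on a neighbourhood of the positive cone. The weights in $\l_{j,t}$ are chosen so that $\l_{j,0}$ is independent of $j$, admits $1$ as a simple leading eigenvalue, and satisfies the pressure identity $\sum_{j=1}^k p_j \tfrac{d}{dt}\lambda_1(\l_{j,t})|_{t=0}=\Lambda$ already quoted in the introduction.

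Since each $\phi_i$ contracts $D$ strictly and the weights are bounded analytic on a neighbourhood of $\overline{D}$, $\l_{j,t}$ is nuclear, so $d_j(z,t):=\det(\id-z\l_{j,t})$ is entire in $z$ and analytic in $t$ near $0$, with zeros at the reciprocals of the eigenvalues of $\l_{j,t}$. Expanding
\[ d_j(z,t)=\exp\Bigl(-\sum_{n\geq 1}\frac{z^n}{n}\tr(\l_{j,t}^n)\Bigr), \]
one evaluates $\tr(\l_{j,t}^n)$ by the holomorphic Lefschetz fixed-point formula applied to the $n$-fold iterate. The fixed points of $\phi_{i_1}\circ\cdots\circ\phi_{i_n}$ are precisely the projective classes of the leading eigenvectors $v_{A,j}$ of the cyclic rearrangements of $A=A_{i_1}\cdots A_{i_n}\in\A^n$, and the derivative of this composition at such a fixed point equals the eigenvalue ratio $\lambda_2(A)/\lambda_1(A)$. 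A direct calculation therefore identifies $\tr(\l_{j,0}^n)$ with $t_n$, and its $t$-derivative at $0$ with the $j$-dependent portion of the sums defining $\tau_{n,i}$. Expanding the exponential using $\log(1-u)=-\sum_l u^l/l$ and collecting powers of $z$ then reproduces the combinatorial expressions for $a_n$ and $\alpha_{n,i}$ verbatim.

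Because $z=1$ is a simple zero of $d_j(\cdot,0)$, implicit differentiation yields
\[ \frac{d}{dt}\lambda_1(\l_{j,t})\Big|_{t=0}=\frac{\partial_t d_j(1,0)}{\partial_z d_j(1,0)}. \]
The denominator equals $\sum_n n a_n$ and is $j$-independent by construction, while summing the numerator over $j$ with weight $p_j$ gives $\sum_{n,i}\alpha_{n,i}$; truncating both sums at degree $N$ yields the proposed approximation $\Lambda_N$.

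The main obstacle, and the source of the super-exponential rate, is a nuclear estimate. On the chosen space the singular values of $\l_{j,t}$ are bounded by $C\kappa^n$ for some $\kappa\in(0,1)$ controlled by the projective contraction rate of the $\phi_i$, and hence Weyl's determinant inequality gives
\[ |a_n|+\sum_{i=1}^k|\alpha_{n,i}|=O\bigl(\kappa^{n(n-1)/2}\bigr) \]
uniformly for $t$ in a complex neighbourhood of $0$. Consequently the numerator and denominator of $\Lambda_N$ each differ from their limits by $O(\kappa^{N^2/2})$, which after division produces $|\Lambda_N-\Lambda|=O(\exp(-\gamma N^2))$ with $\gamma=-\tfrac12\log\kappa$. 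The technical heart of the argument is therefore the explicit Cauchy-type estimate $\|\phi_i'\|_D\leq\kappa<1$ in terms of the constants appearing in Assumption~\ref{ass}, and the propagation of this bound through the singular-value estimates for $\l_{j,t}$.
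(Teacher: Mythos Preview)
This theorem is attributed to Pollicott \cite{pol} and is not proved in the present paper; it is quoted only as background for the paper's own Theorem~\ref{main}. There is therefore no proof in the paper to compare against directly. That said, your outline is an accurate reconstruction of Pollicott's argument, and it is also essentially the strategy the present paper implements (in streamlined form, with a single operator family rather than $k$) to prove Theorem~\ref{main}: realise the transfer operators as trace-class on a Hardy--Hilbert space over a disk $D$ with $\overline{\phi_i(D)}\subset D$, compute $\tr(\l_t^n)$ via the holomorphic fixed-point formula (yielding the factors $(1-\lambda_2(A)/\lambda_1(A))^{-1}$), expand the determinant as in \eqref{trace coef2} to obtain $a_n$ and $\alpha_{n,i}$, differentiate the relation $\det(\id-\lambda_1(t)^{-1}\l_t)=0$ at $t=0$, and deduce the super-exponential error from exponential decay of the singular values together with the Weyl-type bound \eqref{sing coef}.

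One small inaccuracy: the singular-value estimate does not come from a uniform derivative bound $\|\phi_i'\|_D\le\kappa$ as you suggest, but from the geometric fact that $\phi_i(D)$ lies in a disk of radius $r/2<1/2$ about the centre of $D$; the paper's Lemma~\ref{l0 sing} exploits this by truncating the Taylor series of $f\circ\phi_i$ about the centre. For M\"obius maps the distinction is minor, but it is the image radius $r$, not a Cauchy bound on $\phi_i'$, that drives the constant in the $O(\exp(-\gamma N^2))$ estimate.
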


Here $\|\cdot\|$ denotes the Euclidean norm. The article \cite{pol} is one of a growing number of papers which exploit the strong spectral properties of transfer operators that preserve spaces of analytic functions and the Fredholm theory of determinants in order to approximate quantities which can be expressed in terms of the spectrum of some dynamical transfer operator; recent examples include approximations of the singularity dimension of certain self-affine iterated function systems, the Hausdorff dimension of dynamically defined Cantor sets and diffusion coefficients for expanding maps \cite{morris, pj, jpv, pv}. A key feature of this machinery is that it yields approximations with an error estimate that decays super-exponentially fast, as demonstrated in Theorem \ref{thm pol}. Note that the time to process $n$ steps of the algorithm is exponential in $n$ and therefore the error decreases super-polynomially fast in time. The estimates provided by Pollicott's algorithm in Theorem \ref{thm pol} are non-effective, although in \cite[\S 8]{pol} he informally sketches some ideas of how to make his estimates effective. In this article we set out to obtain effective estimates on the Lyapunov exponent $\Lambda$ by using similar ideas of trace-class operators and determinants. Our motivation is threefold: firstly, we obtain a formula for the top Lyapunov exponent which is significantly simpler than Pollicott's in that it requires only the computation of eigenvalues and not of eigenvectors; secondly, our estimates are effective as opposed to qualitative, and allow explicit rigorous bounds on the top Lyapunov exponent to be made; thirdly, by investigating more explicitly the relationship between the matrix entries and the error term we can examine more explicitly for which matrices this method quickly produces a useful approximation and for which matrices it does not.

\begin{thm} \label{main}
For each $n \geq 1$ define
$$\tau_{n}= \sum_{A \in \A^n} p_{A}\log \lambda_1(A)\left(1-\frac{\lambda_2(A)}{\lambda_1(A)}\right)^{-1}$$
and
$$t_n=\sum_{A \in \A^n} p_{A}\left(1-\frac{\lambda_2(A)}{\lambda_1(A)}\right)^{-1}$$
Define
$$\alpha_{n}= \sum_{l=1}^n \frac{(-1)^l}{l!} \sum_{n_1+ \ldots +n_l=n} \sum_{j=1}^l \frac{\tau_{n_j}}{n_j} \prod_{1 \leq m \leq l, m \neq j} \frac{t_{n_m}}{n_m}$$
and
$$a_n= \sum_{l=1}^n \frac{(-1)^l}{l!} \sum_{n_1+ \ldots +n_l=n} \prod_{i=1}^l \frac{t_{n_i}}{n_i}.$$
Then
$$\Lambda_N= \frac{\sum_{n=1}^N \alpha_n}{\sum_{n=1}^N na_n}$$
satisfies $|\Lambda-\Lambda_N| \leq C\exp(-\gamma N^2)$ for some explicit constants $\gamma, C$ that depend only on the matrices $\{A_1, \ldots, A_k\}$. 
\end{thm}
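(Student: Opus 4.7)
Proof plan. Following Ruelle and Pollicott, I would introduce a single one-parameter family of transfer operators encoding the Lyapunov exponent as the derivative of a top eigenvalue. Parametrise the projective line near the positive cone by $y = x_2/x_1 \in [0, \infty]$, so that $A_i$ acts as the Möbius map $\phi_i(y) = (c_i + d_i y)/(a_i + b_i y)$. Since each $A_i$ is positive and invertible, every $\phi_i$ maps $[0, \infty]$ strictly inside itself with a uniform projective contraction rate, and extends to a holomorphic strict contraction of a suitable disc $U \subset \C$ containing $[0, \infty]$; this is the step in which the projective contraction constant from the abstract enters. On the Hardy space $H^2(U)$ define
$$L_t f(y) = \sum_{i=1}^k p_i \|A_i v(y)\|^t \, f(\phi_i(y)),$$
where $v(y) \in \R^2$ is a unit vector in the direction of $y$. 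Each $L_t$ is then trace class; $L_0$ is a Markov-type operator with simple leading eigenvalue $1$, right eigenfunction $h \equiv 1$ and left eigenfunctional equal to the stationary projective distribution $\mu$; and standard first-order perturbation theory yields $\frac{d}{dt}\lambda_1(L_t)\bigr|_{t=0} = \int \sum_i p_i \log \|A_i v(y)\| \, d\mu(y) = \Lambda$. The key simplification compared with Pollicott's construction is that the sum over $i$ sits inside one operator rather than producing $k$ separate operators, which ultimately accounts for the absence of eigenvectors of finite products in the final formula.

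Next I would compute the Grothendieck--Fredholm determinant $D(z, t) := \det(\id - z L_t)$, an entire function of $z$ and holomorphic function of $t$. The standard trace formula for trace-class transfer operators with uniformly contracting holomorphic dynamics gives
$$\tr(L_t^n) = \sum_{A \in \A^n} p_A \cdot \frac{\lambda_1(A)^t}{1 - \lambda_2(A)/\lambda_1(A)},$$
because each composition $\phi_{i_1} \circ \cdots \circ \phi_{i_n}$ has a unique attracting fixed point on $[0, \infty]$ equal to the positive eigendirection of $A = A_{i_1}\cdots A_{i_n}$, at which point $\|A v\|$ equals $\lambda_1(A)$ and the derivative of the composition equals $\lambda_2(A)/\lambda_1(A)$. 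Writing $T_n(t) := \tr(L_t^n)$ we have $T_n(0) = t_n$ and $T_n'(0) = \tau_n$. Expanding $D(z, t) = \exp(-\sum_{n \geq 1} z^n T_n(t)/n)$ as a power series in $z$ then identifies $a_n$ as the coefficient of $z^n$ in $D(z, 0)$ and $\alpha_n$ as the coefficient of $z^n$ in $\partial_t D(z, t)\bigr|_{t=0}$.

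Since $\lambda_1(L_t)^{-1}$ is a simple root of $z \mapsto D(z, t)$ depending analytically on $t$ near $0$, implicit differentiation at $t = 0$ gives
$$\Lambda = \frac{\partial_t D(1, 0)}{\partial_z D(1, 0)} = \frac{\sum_{n \geq 1} \alpha_n}{\sum_{n \geq 1} n a_n},$$
and Theorem \ref{main} is then reduced to controlling the tails of these two series. The standard Hadamard-type bound for Fredholm determinants states that if the singular values of $L_t$ satisfy $\sigma_j(L_t) \leq K\rho^j$ for some $\rho < 1$ uniformly for $t$ in a fixed complex neighbourhood of $0$, then $|a_n|, |\alpha_n| \leq K' \rho^{n(n-1)/2}$, from which $|\Lambda - \Lambda_N| = O(\exp(-\gamma N^2))$ with $\gamma = \frac{1}{2}|\log \rho|$ follows by a routine manipulation of the resulting ratio.

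The main obstacle is making every constant along this chain fully effective in terms of the matrix entries. Concretely I would need: (i) an explicit choice of disc $U$ together with an explicit proof that each $\phi_i$ maps $\overline{U}$ strictly inside $U$ with a contraction rate expressible through a projective contraction constant $C_2$; (ii) explicit bounds on $\sigma_j(L_t)$ on $H^2(U)$ in terms of $C_1, C_2$ and $|t|$, obtained via standard estimates for composition operators between Hardy spaces on concentric discs; (iii) a quantitative spectral gap between the leading eigenvalue $\lambda_1(L_0) = 1$ and the rest of the spectrum, needed to validate the implicit function step on a fixed circle enclosing only the top eigenvalue; and (iv) an explicit conversion of the singular value bound into effective values of $\gamma$ and $C$ in the tail estimate. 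Steps (i), (ii) and (iv) are elementary but intricate calculations in the classical theory of Hardy spaces, while step (iii) is the most delicate, since only a quantitatively effective spectral gap will allow control of the higher-order remainder in the implicit function step.
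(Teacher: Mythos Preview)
Your overall architecture matches the paper's exactly: a single holomorphic family of transfer operators on a Hardy space, $\Lambda=\lambda_1'(0)$ via analytic perturbation, the trace formula yielding $t_n,\tau_n$, the Fredholm determinant identifying $a_n,\alpha_n$, and effective singular-value bounds to control the tails. Two specific choices in your setup, however, do not work as written and are precisely what the paper's construction is designed to avoid.

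First, you propose a disc $U\subset\C$ ``containing $[0,\infty]$''. No bounded disc does this, and Hardy-space machinery in the form you invoke needs a bounded simply connected domain. The paper sidesteps this by parametrising the positive projective line via $\mathbf{x}=(x,1-x)$ with $x\in(0,1)$, and working on the fixed disc $D=\{z:|z-\tfrac12|<\tfrac12\}$; each induced M\"obius map $\phi_{A_i}$ then sends $D$ into a compactly contained subdisc, with contraction rate exactly the constant $r$ of the statement.

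Second, your weight $\|A_i v(y)\|^t$ is not holomorphic in $y$ (the Euclidean norm involves a square root of a sum of squares), so $L_t$ is not even defined on $H^2(U)$. The paper replaces the norm by the linear functional $w_{A}(x)=\langle A\mathbf{x},(1,1)\rangle$, which is affine in $x$, hence extends holomorphically to $D$; crucially, the normalisation $x+(1-x)=1$ forces $w_A(z_A)=\lambda_1(A)$ at the fixed point $z_A$ of $\phi_A$, so the trace formula still produces exactly $\lambda_1(A)^t(1-\lambda_2(A)/\lambda_1(A))^{-1}$. This choice of holomorphic ``norm substitute'' is the device that makes the whole construction run.

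For your step~(iii), which you correctly flag as the delicate point, the paper obtains an explicit bound $|\lambda_2(\mathcal{L}_0)|\leq \tau(\A,\p)=\sum_i p_i\tau(A_i)$ (the weighted Birkhoff contraction coefficient) by a Peres-type argument: restrict an $H^2$-eigenfunction to $(0,1)$, observe it is Lipschitz in the Hilbert metric, and use that $\mathcal{L}_0$ contracts the Hilbert-Lipschitz seminorm by $\tau(\A,\p)$. This feeds into the lower bound for the denominator via $\bigl|\sum_{n\geq1}nb_n(0)\bigr|=\prod_{n\geq2}|1-\lambda_n(0)|$, splitting the product at the first index $M$ where the singular-value bound forces $|\lambda_n(0)|<1$.
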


Observe that our approximation is simpler than the one given in Theorem \ref{thm pol} since it requires only the calculation of eigenvalues and not eigenvectors, as well as the fact that it only requires the computation of the expression $\tau_n$ rather than $\tau_{n,1}, \ldots, \tau_{n,k}$. Essentially these differences in the algorithms boil down to the fact that Pollicott characterises the top Lyapunov exponent as the integral $\int \sum_{i=1}^k p_i \log \left(\frac{\norm{A_i v}}{\norm{v}}\right) \textup{d}\nu(v)$ with respect to the Furstenberg measure $\nu$ on $\R\P^1$ and shows that for each $i=1, \ldots, k$, $\int \log \left(\frac{\norm{A_i v}}{\norm{v}}\right) \textup{d}\nu(v)=\frac{\textup{d}}{\textup{d}t} \lambda_1(\l_{i,t})|_{t=0}$ where $\{\l_{i,t}\}_{t \in \C}$ is some family of operators. Instead, we express the top Lyapunov exponent more directly as the derivative with respect to $t$ at 0 of
$$\lim_{n \to \infty} \left( \sum_{A \in \A^n} p_{A}\norm{A}^t  \right)^{\frac{1}{n}}$$
and approximate this by finding a \emph{single} family of operators $\{\l_t\}_{t \in \C}$ such that the derivative of the above expression at $t=0$ is given by $\frac{\textup{d}}{\textup{d}t} \lambda_1(\l_t)|_{t=0}$.

In order to state the explicit bound on the error $|\Lambda-\Lambda_N|$, for each matrix $A_i \in \A$, let $R_i$ denote the smallest constant for which
\begin{eqnarray}
\frac{1}{R_i} \leq \frac{a_i}{c_i} \leq R_i & \textnormal{and} & \frac{1}{R_i} \leq \frac{b_i}{d_i} \leq R_i\label{R} \end{eqnarray}
and define 
\begin{eqnarray} r_{\A}= r:= \max_{1 \leq i \leq k} \left\{\frac{R_i-1}{R_i+1}\right\}<1.\label{r} \end{eqnarray}
Thus $r$ measures the least amount of projective contraction of the positive quadrant under the action of a matrix belonging to $\A$.

The error $|\Lambda-\Lambda_N|$ can be bounded purely in terms of the constant $C_1$ defined in (\ref{c1}), which can be considered a measure of how far the set of matrices $\A$ is from being a set made up only of column stochastic matrices and the constant $r$. Although the error can be bounded purely in terms of these two constants, we also introduce a third constant which generally will optimise the bound on the error. The constant $\theta$ is defined by
\begin{eqnarray}
\theta:= \max_{1 \leq i \leq k} \left\{ \arcsin\left(\frac{|a_i+c_i-b_i-d_i|}{a_i+b_i+c_i+d_i}\right)\right\}< \frac{\pi}{2}. \label{theta}
\end{eqnarray}
For an upper bound on the error which does not depend on $\theta$ one can replace all instances of $\theta$ below by $\frac{\pi}{2}$. We will now give an explicit bound on $|\Lambda-\Lambda_N|$ in terms of the constants $C_1, r, \theta$.

Let  $C_0= \frac{1}{r\sqrt{1-r^2}}$, $C_2=\sqrt{(\log C_1)^2+\theta^2}$ and let $M \geq 2$ be large enough that $C_0r^{\frac{M+1}{2}}<1$. Let $N$ be sufficiently large that
$$\alpha_N^{+}=\sum_{n=N+1}^{\infty} n\frac{C_0^n r^{\frac{n(n+1)}{2}}}{\prod_{i=1}^n (1-r^i)} < |1-r|^{M-2}\prod_{n=M}^{\infty}(1-C_0r^{\frac{n+1}{2}})=\alpha^{-} .$$
Then
\begin{eqnarray}
|\Lambda-\Lambda_N| &\leq& \frac{\beta_N^{+}}{\alpha^{-}-\alpha_N^{+}} + \frac{\alpha_N^{+}\beta^{+}}{\alpha^{-}(\alpha^{-}-\alpha_N^{+})}\label{rbound} \end{eqnarray}
where
\begin{eqnarray*}
\alpha^{+}=\sum_{n=1}^{\infty} n\frac{C_0^n r^{\frac{n(n+1)}{2}}}{\prod_{i=1}^n (1-r^i)} \\
\beta^{+}=\sum_{n=0}^{\infty} \frac{neC_2C_0^n r^{\frac{n(n+1)}{2}}}{\prod_{i=1}^n (1-r^i)} \\
\beta_N^{+}=\sum_{n=N+1}^{\infty} \frac{neC_2C_0^n r^{\frac{n(n+1)}{2}}}{\prod_{i=1}^n (1-r^i)}. 
\end{eqnarray*}

For clarity we have presented an upper bound given purely in terms of $r$, $\theta$ and $C_1$ although the bound given by (\ref{rbound}) can actually be improved slightly by modifying the definition of $\alpha^{-}$: we present this improved bound in section \ref{summary}. The reason for this is that it is possible to replace $r$ by a smaller constant in the factor $|1-r|^{M-2}$ which appears in the definition of $\alpha^{-}$. This smaller constant is  the `weighted Birkhoff contraction coefficient', which can also be easily defined in terms of the matrix entries of $A_i$ (which we postpone till the next section) and which is always bounded above by $r$, and may coincide with $r$. However, if $r$ is small (meaning that each of the matrices in $\A$ contract the positive quadrant a lot) and $M=2$ then the factor $|1-r|^{M-2}$ does not even appear in the error bound. Indeed, unless $r$ is close to 1 (meaning that the positive quadrant isn't contracted much by the action of the matrices in $\A$) and therefore $M$ is very large, replacing $r$ by the weighted Birkhoff contraction coefficient is unlikely to have any significant effect on the bound. For certain sets of matrices, it is possible to considerably improve the upper bound on the errors $|\Lambda-\Lambda_N|$; see section \ref{eg} for details.

The principal qualitative information which can be gained from Theorem \ref{main} is that our method for estimating Lyapunov exponents is most effective when the constant $r$ is small, particularly when it is less than one half. 

The paper is organised as follows. In \S\ref{prel} we introduce the family of operators that will be studied and recap the relevant functional analytic tools which will be used. In \S \ref{sketch} we will devise the algorithm for computing the approximations $\Lambda_N$, which will be based on the spectral properties of the operators that were introduced in \S\ref{prel}. In \S\ref{est} we obtain effective estimates on the error $|\Lambda-\Lambda_N|$. In \S\ref{eg} we test the performance of our algorithm and the upper bounds on the errors on some examples, and in \S\ref{further} we discuss the prospects for higher-dimensional analogues of our results.

\section{Preliminaries}\label{prel}

\subsection{Projective action and Birkhoff contraction coefficient}

For the remainder of the paper we let $\norm{\cdot}$ denote the Euclidean norm. Let $\R \P^1$ denote the real projective space of dimension 1, that is, $\R\P^1=(\R^2 \setminus (0,0), \sim )$ where $\mathbf{v} \sim \mathbf{w}$ if $\mathbf{v}=\alpha \mathbf{w}$ for some $\alpha \in \R$, $\alpha \neq 0$. Let $\R\P^1_+$ denote the open set of positive directions in $\R\P^1$, with representative vectors $\Delta=\{(x, 1-x): x \in (0,1)\}$. Given $\mathbf{x} \in \R^2 \setminus (0,0)$ we will denote its direction by $\overline{\mathbf{x}} \in \R\P^1$.

Let $A= \begin{pmatrix} a&b \\c&d \end{pmatrix}$ be a positive invertible matrix. Then we can consider the action of $A$ on $\R\P^1$ denoted by $A \cdot \overline{\mathbf{x}}=\overline{A \mathbf{x}}$. We can also consider the corresponding action on the first co-ordinate of representative vectors. Define $\tilde{\phi}_A: (0,1) \to (0,1)$ by
\begin{eqnarray}
\tilde{\phi}_A(x)= \frac{(a-b)x+b}{(a+c-b-d)x+b+d}.
\label{identify}
\end{eqnarray}
Clearly $(\tilde{\phi}_A(x), 1-\tilde{\phi}_A(x)) \in \Delta$ and $\overline{A\begin{pmatrix} x\\1-x\end{pmatrix}}=\overline{(\tilde{\phi}_A(x), 1-\tilde{\phi}_A(x))}$. Therefore, $\tilde{\phi}_A(x)$ describes the action of $A$ on the first co-ordinate of vectors in $\Delta$. Notice that the denominator in (\ref{identify}) is
$$(a+c-b-d)x+b+d= \langle A\x,\u \rangle $$
where $\x=(x,1-x)$, $\u=(1,1)$ and $\langle, \rangle$ denotes the usual dot product 
$$\langle A\x,\u\rangle =A \begin{pmatrix}x \\1-x \end{pmatrix} \cdot \begin{pmatrix} 1\\1\end{pmatrix}.$$
We define $\tilde{w}_A:(0,1) \to \R$, $\tilde{w}_A(x)=(a+c-b-d)x+b+d$. In particular we have the following identity
\begin{eqnarray}
\begin{pmatrix} \tilde{\phi}_A(x) \\1-\tilde{\phi}_A(x) \end{pmatrix}= \frac{1}{\tilde{w}_A(x)} A \begin{pmatrix} x \\1-x \end{pmatrix}.
\label{identity}
\end{eqnarray}

In section \ref{op} we will consider complex extensions of $\tilde{w}_{A_i}$ and $\tilde{\phi}_{A_i}$ in order to construct a suitable operator on a space of complex valued functions that will aid us in studying the Lyapunov exponent. The following elementary but important fact will be key to extracting the Lyapunov exponent from the spectral data of the operators we construct.

\begin{lma}
Let $\u=(1,1)$. There exists a uniform constant $c$, that depends only on the set $\A$ such that for all $x \in (0,1)$, $\x=(x,1-x)$ and $A \in \bigcup_{n =1}^{\infty}\A^n$
\begin{eqnarray}
c^{-1}\norm{A} \leq \langle A\x,\u \rangle \leq c\norm{A}.\label{dot eqn} \end{eqnarray}
\label{dot norm}
\end{lma}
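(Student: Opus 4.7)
The plan is to exploit the cocycle structure of $\tilde{w}_A$ underlying the identity (\ref{identity}). First, note that $\langle A\x, \u\rangle = (a+c)x + (b+d)(1-x) = \tilde{w}_A(x)$, an affine function of $x$ taking values between $\min(a+c, b+d)$ and $\max(a+c, b+d)$ for $x \in (0,1)$. Since $A$ has positive entries, $\max(a+c, b+d)$ equals the induced $\ell^1$ operator norm of $A$, which in $\R^2$ is comparable to the Euclidean operator norm $\norm{A}$ up to a factor of $\sqrt{2}$. This immediately gives the upper bound in (\ref{dot eqn}).

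For the lower bound it suffices to show that the ratio $\tilde{w}_A(x)/\tilde{w}_A(y)$ is uniformly bounded in $x, y \in (0,1)$ and in $A \in \bigcup_{n \geq 1}\A^n$, since then $\min(a+c, b+d)$ is comparable to $\max(a+c, b+d)$, hence to $\norm{A}$. To achieve this I would iterate the identity (\ref{identity}): for $A = A_{i_1}\cdots A_{i_n}$ one obtains the cocycle
$$\tilde{w}_A(x) = \prod_{k=1}^{n} \tilde{w}_{A_{i_k}}\!\bigl(z_k^{(x)}\bigr), \qquad z_k^{(x)} := \tilde{\phi}_{A_{i_{k+1}}\cdots A_{i_n}}(x),$$
so that $\log\tilde{w}_A(x) - \log\tilde{w}_A(y) = \sum_{k=1}^{n} \log\bigl(\tilde{w}_{A_{i_k}}(z_k^{(x)})/\tilde{w}_{A_{i_k}}(z_k^{(y)})\bigr)$, and the task reduces to bounding this sum uniformly in $n$.

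The main obstacle will be establishing geometric decay of each summand in $n-k$. The key observation is that after one step the iterates $z_k^{(x)}$ land in a compact sub-interval $[\epsilon_0, 1-\epsilon_0] \subset (0,1)$ on which the Hilbert and Euclidean metrics are comparable, with constants depending only on $\A$. Then the Birkhoff contraction property, which is exactly what the constant $r<1$ in (\ref{r}) encodes, ensures that the Hilbert diameter of $\tilde{\phi}_{A_{i_{k+1}}\cdots A_{i_n}}((0,1))$ is at most $r^{n-k-1}\cdot O(1)$, and hence its Euclidean diameter is $O(r^{n-k})$. Combined with the fact that $\log\tilde{w}_{A_{i_k}}$ is Lipschitz on $(0,1)$ with constant $|a_{i_k}+c_{i_k}-b_{i_k}-d_{i_k}|/\min(a_{i_k}+c_{i_k}, b_{i_k}+d_{i_k})$, bounded uniformly in $\A$, this gives $|\log(\tilde{w}_{A_{i_k}}(z_k^{(x)})/\tilde{w}_{A_{i_k}}(z_k^{(y)}))| \leq L r^{n-k}$ for a uniform $L$. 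Summing the geometric series yields $|\log(\tilde{w}_A(x)/\tilde{w}_A(y))| \leq L/(1-r)$, which combined with $\max_x \tilde{w}_A(x) \geq \norm{A}/\sqrt{2}$ gives the lower bound with an explicit $c = \sqrt{2}\,e^{L/(1-r)}$ depending only on $\A$.
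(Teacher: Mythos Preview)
Your argument is correct and constitutes a genuine alternative to the paper's proof. The paper obtains the upper bound exactly as you do (Cauchy--Schwarz, equivalence of norms), but for the lower bound it argues purely geometrically: first $|\langle A\x,\u\rangle|\geq\|A\x\|$ because the angle between any vector in the positive quadrant and $\u$ is at most $\pi/4$, and then $\|A\x\|\geq\delta\|A\|\|\x\|$ by locating the Perron eigenvector of $AA^{\mathrm T}$ inside a fixed cone preserved by $\A\cup\A^{\mathrm T}$, so that $A^{\mathrm T}\v$ has controlled angle with $\x$ and $\|A^{\mathrm T}\v\|=\|A\|$. Your route is instead the classical bounded-distortion argument from thermodynamic formalism: factor $\tilde w_A$ as a cocycle and control the variation of $\log\tilde w_A$ via the Birkhoff contraction of the projective orbits. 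This is slightly heavier machinery (it invokes the contraction coefficient and the Hilbert metric, which the paper's lemma does not need at this stage) but it is entirely in keeping with the rest of the paper and it yields an explicit constant $c=\sqrt{2}\,e^{L/(1-r)}$ in terms of data already in play, whereas the paper's $\delta=\cos\alpha'$ depends on an auxiliary cone $\mathcal C$ that is not made fully explicit. One small slip: the Euclidean diameter of $\tilde\phi_{A_{i_{k+1}}\cdots A_{i_n}}((0,1))$ is $O(r^{n-k-1})$ rather than $O(r^{n-k})$ (the first map only lands in a set of finite Hilbert diameter, contraction starts from the second), but this does not affect the summability of the series or the conclusion.
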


\begin{proof}
Fix arbitrary $n \in \N$ and let $A \in \A^n$. To verify the right hand side, notice that by the Cauchy-Schwarz inequality, for all $\mathbf{y} \in \R^2$,
$$|\langle \mathbf{y}, \mathbf{u}\rangle | \leq \sqrt{2} \norm{\mathbf{y}}$$
and therefore since $0<x <1$,
$$|\langle A\mathbf{x}, \mathbf{u} \rangle| \leq \sqrt{2}\norm{A\mathbf{x}} \leq \sqrt{2}\norm{A}.$$

To verify the left hand side we begin by claiming that there exist uniform constants $\epsilon , \delta>0$ which depend only on $\A$ such that
\begin{eqnarray}
|\langle A\mathbf{x}, \mathbf{u}\rangle | &\geq& \epsilon \norm{A \mathbf{x}} \label{epsilon} \\
\norm{A \mathbf{x}} &\geq & \delta \norm{A} \norm{\mathbf{x}} \label{delta}
\end{eqnarray}
which are independent of the choice of $\x$ and $A$. 

First, to see that (\ref{epsilon}) holds, observe that the unoriented angle $\alpha$ between $A\mathbf{x}$ and $\mathbf{u}$ satisfies $0 \leq \alpha \leq \frac{\pi}{4}$ and therefore
$$|\langle A\mathbf{x}, \mathbf{u}\rangle |= \norm{A\mathbf{x}}\norm{\mathbf{u}} \cos \alpha \geq \norm{A\mathbf{x}} \frac{\sqrt{2}}{2} \sqrt{2}=\norm{A\mathbf{x}}.$$
Next, to see that (\ref{delta}) holds, let $\mathcal{C} \subset \mathrm{Int}(\R_{+}^2) \cup \{0\}$ be a closed convex cone which is preserved by all of the matrices in $\A$ and all of the transposes of matrices in $\A$. Let $A \in \bigcup_{n=1}^{\infty} \A^n$ and note that $\mathcal{C}$ is preserved by $A A^{\mathrm{T}}$, where $A^{\mathrm{T}}$ denotes the transpose of $A$. By the Perron-Frobenius theorem, there exists $\mathbf{v} \in \mathcal{C}$ with $\norm{\mathbf{v}}=1$ such that $AA^{\mathrm{T}}\mathbf{v}=\lambda_1(AA^{\mathrm{T}})\mathbf{v}=\norm{A}^2 \mathbf{v}$. In particular, this implies that $\norm{A^{\mathrm{T}}\mathbf{v}}=\norm{A}$. There exists $0< \alpha'< \frac{\pi}{2}$ which only depends on $\mathcal{C}$ (and thus only on the set of matrices $\A$) such that the unoriented angle between $\mathbf{x}$ and $A^{\mathrm{T}}\mathbf{v}$ is at most $\alpha'$. Putting all of this together we get
$$\norm{A\mathbf{x}} \geq \langle A\mathbf{x}, \mathbf{v}\rangle= \langle \mathbf{x}, A^{\mathrm{T}}\mathbf{v}\rangle \geq \norm{x}\norm{A^{\mathrm{T}}\mathbf{v}} \cos \alpha'= \norm{A} \norm{x} \cos \alpha',$$
completing the proof of (\ref{delta}).
Therefore for all $x \in (0,1)$ and $\mathbf{x}=(x, 1-x)$,
\begin{eqnarray*}
|\langle A\mathbf{x}, \mathbf{u}\rangle| &\geq& \epsilon \norm{A \mathbf{x}} \\
&\geq& \epsilon \delta \norm{A} \norm{\mathbf{x}} \\
&\geq& \frac{\epsilon \delta}{\sqrt{2}} \norm{A}|\langle\mathbf{x}, \mathbf{u}\rangle|= \frac{\epsilon \delta}{\sqrt{2}} \norm{A}.
\end{eqnarray*}

\end{proof}

Let us return to the projective space $\R\P^1$ and the projective action of a positive matrix $A$. Let $\overline{\x}, \overline{\y} \in \R\P^1_+$ be positive directions. We can equip $\R\P_+^1$ with the \emph{Hilbert projective metric} by setting
$$h(\overline{\mathbf{x}}, \overline{\mathbf{y}})= \left|\log \frac{x_1 y_2}{x_2 y_1} \right|$$ 
where $(x_1, x_2)$ and $(y_1, y_2)$ are some representatives for the directions $\overline{\mathbf{x}}, \overline{\mathbf{y}}$. Note that $h(\overline{\mathbf{x}}, \overline{\mathbf{y}})$ is independent of the choice of representative vectors. Under the assumption that $A$ is positive, $A$ acts as a strict contraction of the Hilbert metric on $\R\P^1_+$. In particular by defining 
$$\psi(A)= \min\left\{\frac{ad}{bc}, \frac{bc}{ad}\right\}$$
and defining the \emph{Birkhoff contraction coefficient} as
\begin{eqnarray}\tau(A)=\frac{1-\psi(A)^{\frac{1}{2}}}{1+\psi(A)^{\frac{1}{2}}}<1 \label{birkhoff} \end{eqnarray}
then Birkhoff \cite{birkhoff} showed that for all positive directions $\overline{\mathbf{x}}, \overline{\mathbf{y}} \in \R\P_{+}^1$,
$$h(A \cdot \overline{\mathbf{x}}, A \cdot \overline{\mathbf{y}}) \leq \tau(A) h(\overline{\mathbf{x}}, \overline{\mathbf{y}}).$$

We define the \emph{weighted Birkhoff contraction coefficient} associated to our set of matrices $\A$ and our probability vector $\p$ as 
\begin{eqnarray}\tau(\A, \p)=\sum_{i=1}^k p_i \tau(A_i) . \label{wbirkhoff} \end{eqnarray}

Notice that $\tau(A_i) \leq \frac{R_i-1}{R_i+1}$ and $\tau(\A,\p)\leq r$ where $R_i$ and $r$ were defined in (\ref{R}) and (\ref{r}) respectively. In general these inequalities are strict, unless the column ratios of each column of every matrix all coincide.

\subsection{Hardy-Hilbert space} We recall some standard facts about Hardy-Hilbert spaces, see for instance \cite{shapiro}. Let $D$ be an open disc of radius $\rho$ centred at $c \in \C$. The Hardy-Hilbert space $H^2(D)$ consists of all functions $f$ which are holomorphic on $D$ and such that $\sup_{r< \rho} \int_0^1 |f(c+re^{2\pi i t})|^2 \textup{d}t< \infty$. The inner product on $H^2(D)$ is defined by 
$$\langle f, g \rangle_{H^2}= \int_0^1 f(c+re^{2\pi i t})\overline{g(c+re^{2\pi i t})} \textup{d}t$$
which is well-defined since any element of $H^2(D)$ extends as an $L^2$ function of the boundary $\partial D$. The norm of $f \in H^2(D)$ is then given as $\norm{f}_{H^2}=\langle f,f \rangle_{H^2}^{\frac{1}{2}}$. 

An alternative characterisation of $H^2(D)$ is given as the space of all functions $f$ which are holomorphic on $D$ which can be expressed in the form
$$f(z)= \sum_{n=0}^{\infty} \alpha_k(f) \frac{(z-c)^k}{\rho^k}$$
for some square-summable sequence of complex numbers $\{\alpha_k(f)\}_{k=0}^{\infty}$. We will primarily utilise this second characterisation of $H^2(D)$.

The norm of $f \in H^2(D)$ can be written alternatively as
$$\norm{f}_{H^2}= \left(\sum_{k=0}^{\infty} |\alpha_k(f)|^2\right)^{\frac{1}{2}}.$$

Suppose that $f$ is bounded and holomorphic on $D$. Then $f \in H^2(D)$ and $\norm{f}_{H^2} \leq \norm{f}_{\infty}$.

Throughout the rest of this paper we fix $D$ to be the disc of radius $\frac{1}{2}$ centred at $\frac{1}{2}$.

\subsection{Trace class operators, determinants and approximation numbers} Given a compact operator $L:H \to H$ on a Hilbert space $H$, its $n$th \emph{approximation number} is defined as
$$s_n(L)= \inf\{\norm{L-K} : \textnormal{rank}(K) \leq n-1\}.$$
The $n$th approximation number coincides with the n$th$ \emph{singular value}, which is the square root of the $n$th-largest eigenvalue of the operator $L^*L$; this equivalence is sometimes useful in calculations (such as in \cite{bj2}) but will not be needed in this article.

A bounded linear operator on a complex separable Hilbert space $H$ is called \emph{trace-class} if $|L|_{\text{tr}}:=\sum_{n=1}^{\infty} s_n(L)< \infty.$ It is easy to see that if $L_1,L_2: H \to H$ are bounded operators then $s_n(L_1L_2) \leq s_n(L_1) \norm{L_2}$, therefore $|L_1L_2|_{\text{tr}} \leq |L_1|_{\text{tr}} \norm{L_2}$. Whenever $L$ is trace-class then any of its iterates $L^k$ is also trace-class. Given a trace-class operator $L$, the \emph{trace} is defined as
$$\tr (L)= \sum_{n=1}^{\infty} \langle Le_n, e_n \rangle_H$$
where $\{e_n\}$ is any orthonormal basis and $\langle , \rangle_H$ is the inner product for the Hilbert space $H$. This is consistent with respect to the choice of basis (see e.g. \cite[\S 4 Theorem 6.1]{ggk2}). Therefore it is easy to see that $\tr\left( \sum_{n=1}^N L_n\right)=\sum_{n=1}^N \tr (L_n)$.

Given a compact operator $L$, we denote by $\{\lambda_n(L)\}_{n \in \N}$ the monotone decreasing sequence of non-zero eigenvalues of $L$, listed with algebraic multiplicity. Note that by \cite[\S 4 Theorem 3.1 and Corollary 3.4]{ggk2} we have the inequalities
\begin{eqnarray}
\prod_{i=1}^n |\lambda_i(L)| &\leq& \prod_{i=1}^n s_i(L) \qquad \forall n \in \N,
\label{product ineq} \\
\sum_{i=1}^{\infty} |\lambda_i(L) |&\leq& \sum_{i=1}^{\infty} s_i(L). \label{sum ineq}
\end{eqnarray}
If $L$ is trace-class then it is compact and its sequence of eigenvalues $\lambda_n(L)$ is absolutely summable. Moreover, by Lidskii's theorem \cite[\S 4 Theorem 6.1]{ggk2} the trace of $L$ is equal to
$$\textnormal{tr}(L)= \sum_{n=1}^{\infty} \lambda_n(L).$$

For a trace-class operator $L$, we define\footnote{The Fredholm determinant of a trace-class operator admits several descriptions and as such may be defined in various ultimately equivalent ways. We choose this formula as a definition largely for the sake of its accessibility. Alternative approaches may be found in e.g. \cite{bs,ggk2}.} the \emph{Fredholm determinant} of $L$ to be the quantity
\begin{eqnarray}
\det(\id -zL)= \prod_{n=0}^{\infty} (1-z\lambda_n(L))
\label{det}
\end{eqnarray}
which is an entire function of $z$ \cite[Theorem 3.3]{bs}, so in particular there exist $a_n \in \C$ such that
$$\det(\id-zL)= \sum_{n=0}^{\infty} a_nz^n.$$
Note that by (\ref{det}) the roots of $\det(\id -zL)$ are precisely the reciprocals of the eigenvalues of $L$, and the degree of each zero is given by the multiplicity of the corresponding eigenvalue. Moreover, each coefficient $a_n$ can be expressed in terms of the traces of $L^m$ for $1 \leq m \leq n$; we will show this for a specific operator in (\ref{trace coef2}). 

On the other hand, by finding the coefficient of $z^n$ in (\ref{det}) we see that 
$$a_n=(-1)^n \sum_{i_1< \ldots <i_n} \lambda_{i_1}(L) \ldots \lambda_{i_n}(L)$$
and moreover by \cite[Corollary VI.2.6]{ggk}
\begin{eqnarray}
|a_n| \leq \sum_{i_1< \ldots< i_n} |\lambda_{i_1}(L) \ldots \lambda_{i_n}(L)| \leq \sum_{i_1< \ldots < i_n} s_{i_1}(L) \ldots s_{i_n}(L).\label{sing coef}
\end{eqnarray}
We will see that effective estimates on the Lyapunov exponents will depend on obtaining effective upper bounds on the coefficients of the determinant $\det(\id-zL)$ for an appropriate trace-class operator $L$ and therefore, in view of (\ref{sing coef}) this will boil down to obtaining effective upper bounds on the approximation numbers of $L$.

\subsection{Analytic perturbation theory}

We say that a bounded linear operator $L$ on a Banach space has \emph{spectral gap} if $L=\lambda P+N$ where $P$ is a rank one projection (so $P^2=P$ and $\dim(\textnormal{Im}(P))=1$), $N$ is a bounded operator with spectral radius $\rho(N)< |\lambda|$ and $PN=NP=0$. $L$ does not need to be compact in order to have a spectral gap, however if the operator $L$ \emph{is} compact and has a simple leading eigenvalue\footnote{Throughout the paper we say that an eigenvalue is simple if it is \emph{algebraically simple}, that is, the eigenvalue has a one-dimensional generalised eigenspace.} and no other eigenvalues with the same absolute value, it has a spectral gap. 

We can use the standard techniques of perturbation theory \cite{kato} to relate the Lyapunov exponent of a set of matrices to the spectral properties of an appropriate operator. The following perturbation theorem is presented in a more general form in \cite[Theorem 3.8]{hennion}.

\begin{thm}[Analytic perturbation theorem] \label{apt} Let $\{L_t\}_{t \in \C}$ be a family of bounded linear operators on a Banach space such that $t \mapsto L_t$ is holomorphic and $L_0$ has spectral gap. Then there exists an open neighbourhood $U \subset \mathbb{C}$ of 0 for which $L_t$ has spectral gap for all $t \in U$. Moreover  there exist $\lambda(t), P_t, N_t$ which are holomorphic families on $U$ such that:
\begin{enumerate}
\item[(a)] $L_t=\lambda(t) P_t+N_t$,
\item[(b)] $N_t P_t=P_t N_t=0$
\item[(c)] $P_t$ is a bounded rank one projection and has the form $$P_t= \frac{1}{2\pi i} \int_{\gamma} (s \id -L_t)^{-1} \textup{d}s $$
for some small circle $\gamma$ around $\lambda$ which separates it from the rest of the spectrum of $L_0$,
\item[(d)] $\rho(N_t)< |\lambda(t)|- \epsilon$ for some $\epsilon>0$ which is independent of $t$.
\end{enumerate}
\end{thm}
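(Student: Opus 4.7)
The plan is to construct $\lambda(t)$, $P_t$, and $N_t$ via the classical Riesz--Dunford functional calculus applied to the holomorphic family $\{L_t\}$. Exploiting the spectral gap of $L_0$, first fix a small circle $\gamma$ around $\lambda_0 := \lambda(0)$ that separates $\lambda_0$ from the rest of $\sigma(L_0)$, so that the resolvent $(s\id - L_0)^{-1}$ is defined and holomorphic in $s$ on a neighbourhood of $\gamma$. A Neumann series argument applied to the identity
$$s\id - L_t = (s\id - L_0)\bigl(\id - (s\id - L_0)^{-1}(L_t - L_0)\bigr),$$
together with compactness of $\gamma$ and norm-continuity of $t \mapsto L_t$, produces a neighbourhood $U$ of $0$ on which $(s\id - L_t)^{-1}$ exists for every $s \in \gamma$ and is jointly holomorphic in $(s, t)$.

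Next, I would define
$$P_t := \frac{1}{2\pi i}\int_\gamma (s\id - L_t)^{-1}\,ds$$
for $t \in U$. Differentiation under the integral sign shows that $t \mapsto P_t$ is holomorphic, and the standard resolvent identity gives that each $P_t$ is a bounded projection commuting with $L_t$. Because $t \mapsto P_t$ is norm-continuous and $P_0$ has rank one, the classical fact that idempotents close in norm are similar -- via the invertible intertwiner $P_t P_0 + (\id - P_t)(\id - P_0)$, which is a small perturbation of $\id$ -- forces $P_t$ to remain rank one on a possibly smaller neighbourhood, still called $U$. The subspace $\mathrm{Im}(P_t)$ is one-dimensional and $L_t$-invariant, so $L_t|_{\mathrm{Im}(P_t)}$ is multiplication by a unique scalar $\lambda(t)$; combining $L_t P_t = \lambda(t) P_t$ with $P_t^2 = P_t$ yields $\lambda(t) = \tr(L_t P_t)$, which is manifestly holomorphic on $U$. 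Setting $N_t := L_t - \lambda(t) P_t = L_t(\id - P_t)$ establishes assertions (a), (b) and (c).

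The remaining claim (d), the uniform persistence of the spectral gap, is where I expect the genuine work to lie. By construction $N_t$ annihilates $\mathrm{Im}(P_t)$ and coincides with $L_t$ on $\mathrm{Im}(\id - P_t)$, so $\sigma(N_t) \subseteq \{0\} \cup (\sigma(L_t) \setminus \{\lambda(t)\})$. Choose a second closed contour $\gamma'$ enclosing $\sigma(L_0) \setminus \{\lambda_0\}$ at positive distance and lying strictly inside the disc $\{|s| < |\lambda_0|\}$. The same Neumann series reasoning yields a uniform bound on $\|(s\id - L_t)^{-1}\|$ for $s \in \gamma'$ and $t$ in a neighbourhood of $0$, and hence a corresponding uniform bound on $(s\id - N_t)^{-1}$ there. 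Combining this with the estimate $\rho(N_t) \leq \sup_{s \in \gamma'}\|(s\id - N_t)^{-1}\|^{-1}$ (equivalently, upper semicontinuity of the spectrum) and the continuity of $\lambda(t)$, one extracts an $\epsilon > 0$ independent of $t \in U$ for which $\rho(N_t) < |\lambda(t)| - \epsilon$. The crux of the argument, and what I expect to require the most care, is arranging this bound \emph{uniformly} in $t$ rather than pointwise; the essential input is the joint holomorphy of the resolvent on the compact contour $\gamma'$, which is precisely what allows one to upgrade the pointwise spectral gap at $t = 0$ to a uniform gap on a whole neighbourhood.
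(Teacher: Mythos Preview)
The paper does not actually prove this theorem: it is stated with a citation to \cite[Theorem 3.8]{hennion} (and implicitly to Kato \cite{kato}) and then used as a black box. Your outline is precisely the standard Riesz--Dunford/Kato argument that one finds in those references, so in that sense your approach and the paper's ``approach'' (namely, invoking the literature) coincide.

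One small correction in your treatment of (d): the displayed inequality $\rho(N_t) \leq \sup_{s \in \gamma'}\|(s\id - N_t)^{-1}\|^{-1}$ is not the right statement (the resolvent bound $\|(s\id - A)^{-1}\| \geq \mathrm{dist}(s,\sigma(A))^{-1}$ goes the other way for this purpose). What you actually want is the following. Take $\gamma'$ to be a circle of radius $r$ with $\rho(N_0) < r < |\lambda_0|$, disjoint from $\gamma$. The compact region inside a large circle $\Gamma \supset \sigma(L_0)$ but outside both $\gamma$ and $\gamma'$ lies in the resolvent set of $L_0$, and your Neumann series argument (applied on this compact set) keeps it in the resolvent set of $L_t$ for all $t$ in a neighbourhood of $0$. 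Hence $\sigma(L_t)$ is entirely contained in the interiors of $\gamma$ and $\gamma'$, the Riesz projections over $\gamma$ and $\gamma'$ sum to the identity, and $N_t = L_t(\id - P_t)$ has spectrum inside $\{|s| \leq r\}$. Thus $\rho(N_t) \leq r$ uniformly, and since $\lambda(t) \to \lambda_0$ one may shrink $U$ so that $|\lambda(t)| > r + \epsilon$ for a fixed $\epsilon > 0$. This is the uniform gap you were aiming for; the mechanism is exactly the ``joint holomorphy on a compact contour'' you identified, just routed through containment of the spectrum rather than a resolvent-norm inequality.
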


\subsection{Transfer operator} \label{op} 

Recall that $A_i=\begin{pmatrix} a_i&b_i \\c_i& d_i \end{pmatrix}$. For each $i \in \{1, \ldots, k\}$ let $\phi_{A_i}: D \to \C$ denote the complex extension of $\tilde{\phi}_{A_i}$ to $D$ given by
\begin{eqnarray}\phi_{A_i}(z)= \frac{(a_i-b_i)z+b_i}{(a_i+c_i-b_i-d_i)z+b_i+d_i}.\label{phi} \end{eqnarray}
Note that this extension is well-defined since $\Re(z)>0$ and $\Re(1-z)>0$ for all $z \in D$ (where $\Re(z)$ denotes the real part of $z$) and therefore the real part of the denominator is positive; in particular the denominator does not vanish anywhere on $D$.

Given $A=A_{i_1}\cdots A_{i_n} \in \A^n$ we denote $\phi_A= \phi_{A_{i_1}} \circ \ldots \circ \phi_{A_{i_n}}$. It is easy to see that if $A \in \A^n$ is given by $A= \begin{pmatrix} a&b \\c&d \end{pmatrix}$ then $\phi_A(z)=\frac{(a-b)z+b}{(a+c-b-d)z+b+d}.$ 

Observe that for each $A \in \A^n$, $\phi_{A}$ has a unique fixed point. To see this, recall that by the Perron-Frobenius theorem $A$ has a positive eigenvalue $\lambda_1(A)$ and a corresponding positive eigenvector $(v_1, v_2)$. Therefore, putting $z_A=\frac{v_1}{v_1+v_2} \in (0,1)$ we see that $\phi_{A}(z_A)=z_A$. To see that the fixed point is unique,  define the holomorphic function $g_A: B(0,1) \to \C$ by $g_A(z)=2\phi_A(\frac{z+1}{2})-1$, where $B(0,1)$ denotes the open unit disk centred at 0. Observe that $y_A=2z_A-1$ is a fixed point of $g_A$. Also, since $\phi_A(D) \subset D$, $g_A$ preserves $B(0,1)$ and therefore by Schwarz's lemma, $y_A$ is the unique fixed point of $g_A$. In particular, $z_A$ is the unique fixed point of $\phi_A$.

We define the \emph{transfer operator} $\l_0: H^2(D) \to H^2(D)$  as
$$\l_0f= \sum_{i=1}^k p_i f\circ \phi_{A_i}.$$
It is easy to see that for each $i$, $\phi_{A_i}(D) \subset D$ since $\phi_{A_i}$ maps $D$ to a disk centred on the real axis whose boundary passes through the points $\frac{b_i}{b_i+d_i}$ and $\frac{a_i}{a_i+c_i}$.  Therefore since $\phi_{A_i}$ is a holomorphic self-map of $D$, by Littlewood's theorem \cite[page 11]{shapiro} it follows that $\l_0f \in H^2(D)$.

It is easy to see that $1$ is an eigenvalue of $\l_0$ for the eigenfunction $\one$. In fact it is a simple, maximal eigenvalue of $\l_0$.

\begin{prop}
1 is a simple maximal eigenvalue of $\l_0$, and is the only eigenvalue of modulus 1.
\label{simple}
\end{prop}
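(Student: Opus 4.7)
The plan is to reduce the proposition to the single uniform estimate that, for every $f \in H^2(D)$,
\[
\Delta_n(f) := \sup_{z,w \in D}\, |\l_0^n f(z) - \l_0^n f(w)| \xrightarrow{n\to\infty} 0.
\]
Given this, both halves of the statement follow by short abstract arguments. If $\l_0 f = \lambda f$ with $|\lambda|=1$, then $\l_0^n f = \lambda^n f$ and
\[
|f(z) - f(w)| \;=\; |\lambda|^n |f(z) - f(w)| \;=\; |\l_0^n f(z) - \l_0^n f(w)| \;\leq\; \Delta_n(f) \;\to\; 0,
\]
so $f$ is constant, forcing $\lambda=1$ and simultaneously showing that $1$ is the only eigenvalue on the unit circle and that its geometric eigenspace is one-dimensional.

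For algebraic simplicity, suppose $(\l_0 - \id)^2 g = 0$ and set $f := (\l_0 - \id) g$. Then $\l_0 f = f$, so by the previous step $f \equiv c$ for some constant, and a trivial induction gives $\l_0^n g = g + nc$. On the other hand, since each $\phi_{A_i}$ sends $\overline{D}$ strictly inside $D$, every iterated composition maps $D$ into the fixed compact set $K := \bigcup_{i=1}^k \phi_{A_i}(\overline{D}) \subset D$, whence
\[
|\l_0^n g(z)| \;\leq\; \sum_{A \in \A^n} p_A \sup_{K}|g| \;=\; \sup_{K}|g| \;<\; \infty
\]
uniformly in $n$ and $z \in D$. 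Boundedness of $z\mapsto g(z)+nc$ uniformly in $n$ then forces $c=0$, so $(\l_0 - \id) g = 0$, as required.

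The substantive task is the decay of $\Delta_n(f)$. By Assumption \ref{ass}, $\phi_{A_i}(D)$ is the Euclidean disc whose diameter is the real segment joining $\tfrac{b_i}{b_i+d_i}$ and $\tfrac{a_i}{a_i+c_i}$, both lying strictly inside $(0,1) \subset \overline{D}$, so $\phi_{A_i}(\overline{D}) \subset D$. A standard contraction argument — for instance, iterating the Birkhoff contraction coefficient \eqref{birkhoff} (submultiplicative under matrix products) to control the Hilbert diameter of $\phi_A(D)$ and converting back to Euclidean diameter via \eqref{dot eqn}, or equivalently applying Schwarz--Pick on $D$ — yields a uniform bound $\operatorname{diam}\phi_A(\overline{D}) \leq C\rho^n$ for all $A \in \A^n$ and some $\rho<1$ depending only on $\A$. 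Since every $f \in H^2(D)$ is holomorphic on $D$ and therefore Lipschitz on the compact set $K$ with some constant $L_f$, the triangle inequality gives
\[
\Delta_n(f) \;\leq\; \sum_{A \in \A^n} p_A L_f \operatorname{diam}\phi_A(\overline{D}) \;\leq\; L_f C \rho^n \;\to\; 0.
\]
The only conceptual obstacle is extracting this exponential contraction rate from the positivity hypothesis on $\A$; this is essentially already packaged into the Birkhoff coefficient set up in Section \ref{prel}, so once it is invoked the argument is routine.
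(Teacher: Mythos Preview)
Your argument is correct, and the algebraic-simplicity half matches the paper's essentially verbatim. The difference is in how you establish geometric simplicity and rule out other unimodular eigenvalues: the paper does this in two lines via the maximum modulus principle (if $\l_0 f = \lambda f$ with $|\lambda|=1$ then $|f(z)| \leq \sum_i p_i |f(\phi_{A_i}(z))| \leq \sup_{K}|f|$, so $|f|$ attains its supremum over $D$ at an interior point and $f$ must be constant), whereas you go through the quantitative oscillation $\Delta_n(f)\to 0$ driven by the shrinking of the image discs. Your route is more work but buys an explicit exponential rate, in the spirit of the later estimates in \S\ref{est} and Lemma~\ref{peres}. One small caveat on the contraction step: the Hilbert metric and Birkhoff coefficient from \S\ref{prel} live on the real interval $(0,1)$, not on the complex disc, so that option needs the additional (easy) observation that $\phi_A(D)$ is a disc symmetric about $\R$ whose Euclidean diameter therefore equals the length of the real interval $\phi_A((0,1))$; the Schwarz--Pick alternative you mention works directly on $D$ and is the cleaner choice here.
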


\begin{proof}
It is easy to see that 1 is an eigenvalue of $\l_0$ and that $\one$ is an eigenfunction for this eigenvalue. We begin by showing that it is a geometrically simple eigenvalue. Suppose that $f \in H^2(D)$ is a fixed point of $\l_0$ and that $f \neq 0$. We will show that $f$ must be a constant function. First, observe that
$$|f(z)|=|\l_0 f(z)| \leq \sum_{i=1}^k p_i|f\circ \phi_{A_i}(z)| \leq \sup_{z' \in \bigcup_{i=1}^k \overline{\phi_{A_i}(D)}} |f(z')|$$
where the right hand side is finite because $\bigcup_{i=1}^k \overline{\phi_{A_i}(D)}$ is a compact subset of $D$. Therefore, 
$$\sup_{z \in D}|f(z)| \leq\sup_{z' \in \bigcup_{i=1}^k \overline{\phi_{A_i}(D)}} |f(z')|=|f(z_0)|$$
for some $z_0 \in \bigcup_{i=1}^k\overline{\phi_{A_i}(D)} $. By the maximum-modulus principle, $f$ is constant on $D$. By the same argument we see that there can be no other eigenvalues of modulus 1.

Therefore it remains to show that $1$ is an algebraically simple eigenvalue. We need to show that $\ker(\l_0-\id)^2$ is one dimensional (so only consists of the constant functions). For a contradiction suppose that there exists $f \in H^2(D)$ for which $(\l_0-\id)f \neq 0$ but $(\l_0-\id)f \in \ker(\l_0-\id)$. So in particular $(\l_0-\id)f = c \one$ for some constant $c$. In particular, $c \neq 0$ since $(\l_0-\id)f \neq 0$ and therefore by replacing $f$ by $c^{-1} f$ we obtain that $(\l_0-\id)f=\one$, that is, $\l_0f= \one +f$. By induction we see that 
\begin{eqnarray}\l_0^n f= n \one+f. \label{ind} \end{eqnarray}

On the other hand, define 
$$\Gamma^n= \overline{ \bigcup_{A \in \A^n} \phi_{A}(D)}$$
and define
$\Gamma= \bigcap_{n=1}^{\infty} \Gamma^n$. Since $\Gamma^n$ is a nested sequence of compact subsets of $D$, $\Gamma$ is a compact subset of $D$. For any $z \in \Gamma$, 
\begin{eqnarray}
|\l_0^nf(z)| &=& \left| \sum_{A \in \A^n} p_{A}  f(\phi_{A}(z))\right| \\
&\leq & \sup_{z \in \Gamma} |f(z)|.
\end{eqnarray}
By (\ref{ind}), $|\l_0^n f(z)|=|n+f(z)| \geq n-|f(z)|$ implying that
$$n \leq 2\sup_{z \in \Gamma} |f(z)|$$
which is clearly a contradiction since $f$ is bounded on $\Gamma$.

\end{proof}

For each $i \in \I$ let $w_{A_i}: D \to \C$ denote the complex extension of $\tilde{w}_{A_i}$ to $D$ given by
$$w_{A_i}(z)= (a_i+c_i-b_i-d_i)z+b_i+d_i.$$
For any $A=A_{i_1} \cdots A_{i_n} \in \A^n$ define
$$w_A=w_{A_{i_1}}(\phi_{A_{i_2} \cdots A_{i_n}})w_{A_{i_2}}(\phi_{A_{i_3} \ldots A_{i_n}}) \cdots w_{A_{i_n}}.$$ By (\ref{identity}), for all $x \in (0,1)$ and $A \in \A^n$
\begin{eqnarray}
w_{A}(x)= \langle A \x, \u \rangle  \label{wi dot}
\end{eqnarray}
where $\x=(x,1-x)$, $\u=(1,1)$.

Since $w_{A_i}$ maps $D$ to the disc centred in the real axis whose boundary passes through the points $a_i+c_i$ and $b_i+d_i$ it follows that for all $z \in D$,
\begin{eqnarray}\min\{a_i+c_i, b_i+d_i\} \leq |w_{A_i}(z)| \leq \max\{a_i+c_i, b_i+d_i\}.
\label{bounds}
\end{eqnarray}

For $|t|>0$, notice that $w_{A}(z)^t= \exp(t\log w_A(z))$ defines a holomorphic function from $D$ to $\C$, where $\log$ is understood as the unique holomorphic function from the right half plane to $\C$ such that $\exp \log z =z$ and $\log 1=0$. Since $\Re(w_A(z))>0$ for all $z \in D$ this extension is well-defined. 

For $|t|>0$ we define the perturbed transfer operator $\l_t: H^2(D) \to H^2(D)$ by
$$\l_tf=\sum_{i=1}^k p_iw_{A_i}(z)^t f \circ \phi_{A_i}.$$

Note that since $\phi_{A_i}$ are holomorphic self maps of $D$ and $w_{A_i}^t$ are bounded holomorphic functions on $D$ it again follows that $\l_t f \in H^2(D)$, see \cite[page 11]{shapiro}.

Also notice that
$$\l_t^n f= \sum_{A \in \A^n } p_{A}  w_{A}^t f \circ \phi_{A}.$$

Directly from Lemma \ref{dot norm} and (\ref{wi dot}) we see that the exponential growth rate of $w_{A_{i_1} \cdots A_{i_n}}(x)$ at a point $x \in (0,1)$ will be the same as the exponential growth rate of the norm $\norm{A_{i_1} \cdots A_{i_n}}$ since for any $A \in \A^n$
\begin{eqnarray}
c^{-1}\norm{A} \leq w_{A}(x) \leq c\norm{A}
\label{wi norm}
\end{eqnarray}
for a uniform constant $c$ which is independent of $n$ and $A$. This is precisely the property that will allow us to relate $\Lambda$ to the spectral properties of $\l_0$. 

Notice that if the constant $C_1$ defined in Assumption \ref{ass} was equal to 1, then for arbitrary $x \in (0,1)$ and all $A \in \A^n$, $w_{A}(x)=1$, which implies that $\Lambda=0$ by (\ref{wi norm}).

\section{Approximations of $\Lambda$}\label{sketch}

The following proposition establishes the link between the Lyapunov exponent and the spectral properties of $\l_0$.

\begin{prop}
Let $\lambda_1(t)$ denote the top eigenvalue of $\l_t$. There exists an open neighbourhood $U \subset \C$ of 0 such that $\lambda_1(t)$ is holomorphic for $t \in U$. Moreover
\begin{eqnarray}
\Lambda= \lambda_1'(0). \label{premain}
\end{eqnarray}
\label{pre main prop}
\end{prop}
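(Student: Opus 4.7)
The plan is to apply the analytic perturbation theorem (Theorem \ref{apt}) to the family $\{\l_t\}$ at $t=0$, and then to extract $\Lambda = \lambda_1'(0)$ by comparing the spectral decomposition of $\l_t^n \one$ with the explicit formula $\l_t^n \one(x) = \sum_{A \in \A^n} p_A w_A(x)^t$, using (\ref{wi norm}) to pass from $w_A$ back to the matrix norm $\norm{A}$.

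First I would verify the hypotheses of Theorem \ref{apt}. The map $t \mapsto \l_t$ is holomorphic as a family of bounded operators on $H^2(D)$ because each multiplier $w_{A_i}(z)^t = \exp(t \log w_{A_i}(z))$ is entire in $t$ with values in the bounded holomorphic functions on $D$, and composition with $\phi_{A_i}$ is a bounded operator on $H^2(D)$. Since $\phi_{A_i}(\overline{D}) \Subset D$, these composition operators are in fact compact, so each $\l_t$ is compact; combined with Proposition \ref{simple}, which states that $1$ is the unique eigenvalue of $\l_0$ on the unit circle and is algebraically simple, this gives $\l_0$ a spectral gap. Theorem \ref{apt} then supplies an open disc $U \subset \C$ around $0$ and holomorphic families $\lambda_1(t), P_t, N_t$ on $U$ with $\l_t = \lambda_1(t) P_t + N_t$, $\lambda_1(0) = 1$, $P_0 \one = \one$, and a uniform spectral gap $\rho(N_t) < 1 - \epsilon$.

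Next I would iterate to obtain $\l_t^n \one = \lambda_1(t)^n P_t \one + N_t^n \one$. Evaluating at a fixed $x \in (0,1)$ and differentiating in $t$ at $t=0$, the left hand side becomes $\sum_{A \in \A^n} p_A \log w_A(x)$, while the right hand side, using $\lambda_1(0) = 1$ and $P_0 \one = \one$, becomes
\[
n \lambda_1'(0) + \partial_t (P_t \one)(x)\big|_{t=0} + \partial_t (N_t^n \one)(x)\big|_{t=0}.
\]
The middle term is a constant independent of $n$. For the last term I would use Cauchy's integral formula on a small circle $|s| = r$ inside $U$, writing $\partial_t (N_t^n \one)(x)|_{t=0} = \frac{1}{2\pi i}\oint_{|s|=r} s^{-2} (N_s^n \one)(x) \, ds$; a uniform bound $\norm{N_s^n} \le C \rho^n$ for some $\rho < 1$ (obtained from the uniform spectral gap on the circle), together with the fact that point evaluation at $x$ is a bounded functional on $H^2(D)$, makes this integral $O(\rho^n)$. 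Dividing through by $n$ and letting $n \to \infty$ therefore yields
\[
\lim_{n \to \infty} \frac{1}{n} \sum_{A \in \A^n} p_A \log w_A(x) = \lambda_1'(0).
\]
By (\ref{wi norm}) we have $|\log w_A(x) - \log \norm{A}| \le \log c$ uniformly in $A$, so this limit coincides with $\lim_n \frac{1}{n} \sum_{A \in \A^n} p_A \log \norm{A}$, which by definition is $\Lambda$.

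The main obstacle is the uniform operator-norm decay $\norm{N_s^n} \le C \rho^n$ throughout a neighbourhood of $0$: the perturbation theorem gives a uniform spectral radius bound $\rho(N_s) \le 1 - \epsilon$, but promoting this to the same geometric rate for the operator norm requires an additional argument, for example via the Cauchy representation of $N_s^n$ by the resolvent of $\l_s$ on a suitable contour. Everything else reduces to combining Theorem \ref{apt} with the dictionary between matrix products and the transfer operator furnished by (\ref{wi dot}) and (\ref{wi norm}).
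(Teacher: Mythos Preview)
Your argument is correct and your identification of the only nontrivial point (uniform geometric decay of $\|N_s^n\|$ on a circle $|s|=r$) is accurate; the resolvent contour argument you sketch does close this gap. However, the paper takes a slightly different and cleaner route that sidesteps this difficulty entirely. Instead of iterating $\l_t^n$ on the fixed vector $\one$ and then having to control $\partial_t(N_t^n\one)\big|_{t=0}$, the paper iterates $\l_t^n$ on the \emph{perturbed eigenfunction} $h_t:=P_t\one$, so that the identity is exactly $\lambda_1(t)^n h_t(z_0)=\sum_{A\in\A^n}p_A\,w_A(z_0)^t\,h_t(\phi_A(z_0))$ with no $N_t$-contribution at all. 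Differentiating at $t=0$ and using $h_0=\one$, $\lambda_1(0)=1$ gives
\[
n\lambda_1'(0)-\sum_{A\in\A^n}p_A\log w_A(z_0)=\l_0^n g_0(z_0)-g_0(z_0),\qquad g_0:=\partial_t h_t\big|_{t=0},
\]
and the right-hand side is bounded by $2\sup_{z\in\Gamma}|g_0(z)|$, a constant independent of $n$, because $z_0$ and all $\phi_A(z_0)$ lie in the compact set $\Gamma\subset D$. Dividing by $n$ finishes the proof.

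The trade-off is this: your decomposition $\l_t^n\one=\lambda_1(t)^nP_t\one+N_t^n\one$ is conceptually direct but forces you to control a $t$-derivative of $N_t^n$, hence a uniformity question in $t$; the paper's choice of applying the operator to $h_t$ rather than to $\one$ pays the small price that the right-hand side now involves $h_t(\phi_A(z_0))$, but the reward is that after differentiation the residual term is simply $\l_0^n g_0(z_0)-g_0(z_0)$, which is trivially $O(1)$ without any spectral-radius-to-norm promotion. (Incidentally, even in your framework you can avoid the uniformity issue: since $N_0\one=0$, the product rule gives $\partial_t(N_t^n\one)\big|_{t=0}=N_0^{\,n-1}\big(\partial_tN_t\big|_{t=0}\big)\one$, which only needs decay of $\|N_0^{\,n-1}\|$ at the single parameter value $t=0$.)
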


\begin{proof}
It will follow from lemma \ref{l0 sing} that $\l_0$ is trace-class and therefore compact. By proposition \ref{simple}, $\lambda_1(0)$ is a simple eigenvalue. Since $t \mapsto \langle \l_tf,g\rangle_{H^2}$ is analytic for all $f, g \in H^2(D)$, it follows that $t \mapsto \l_t$ is analytic in $t$. Therefore, standard analytic perturbation theory arguments can be used to prove the first part.

By applying Theorem \ref{apt} with $L_t=\l_t$ immediately implies that $\lambda_1(t)$ is holomorphic in $t$. 

Next put
$$\mathcal{P}_t= \frac{1}{2\pi i} \int_{\gamma} (s \id -\l_t)^{-1} \textup{d}s $$
as in (c) of Theorem \ref{apt}. (a)-(c) of Theorem \ref{apt} imply that the image of $\mathcal{P}_t$ is an eigenspace for the eigenvalue $\lambda_1(t)$ and that $h_t= \mathcal{P}_t \one$ is an eigenfunction for the eigenvalue $\lambda_1(t)$. Note that $h_0= \one$. Since $t \mapsto \mathcal{P}_t$ is holomorphic it immediately follows that $t \mapsto h_t$ is also holomorphic. We write $g_0= \frac{\textup{d}}{\textup{d}t} h_t \bigr|_{t=0} \in H^2(D)$.

Fix some $z_0 \in \Gamma \cap (0,1)$. To deduce  (\ref{premain}), observe that for each $n>1$ and $t \in U$,
\begin{eqnarray*}
\lambda_1(t)^n h_t(z_0)&=& (\l_t^n h_t)(z_0)= \sum_{A \in \A^n} p_{A} w_{A}(z_0)^t h_t(\phi_{A}(z_0)) \\
&=& \sum_{A \in \A^n} p_{A} \exp(t \log w_{A}(z_0)) h_t(\phi_{A}(z_0)) .
\end{eqnarray*}
Differentiating at $t=0$ we obtain
\begin{eqnarray*}
n\lambda_1'(0) + g_0(z_0)&=& \sum_{A \in \A} p_{A} \log w_{A}(z_0)+ \sum_{A \in \A^n} p_{A} g_0(\phi_{A}(z_0)) \\
&=& \sum_{A \in \A^n} p_{A} \log w_{A}(z_0) +\l_0^n g_0(z_0)
\end{eqnarray*}
where we used that $\lambda_1(0)=1$ and $h_0= \one$. Therefore
$$\left|n \lambda_1'(0)- \sum_{A \in \A^n} p_{A} \log w_{A}(z_0)\right| = |\l_0^n g_0(z_0)-g_0(z_0)| \leq 2 \sup_{z_0 \in \Gamma}|g_0(z_0)|$$
which is finite due to the compactness of $\Gamma$.
By (\ref{wi norm}), there exists some uniform constant $C$ such that for all $A \in \A^n$,
$$\log \norm{A} -C \leq \log w_{A}(z_0) \leq \log \norm{A} +C $$
and therefore
$$\Lambda= \lim_{n \to \infty} \frac{1}{n} \sum_{A \in \A^n} p_{A} \log w_{A}(z_0)= \lambda_1'(0).$$
\end{proof}

In section \ref{est} we will show that for all $t \in \C$ the approximation numbers $s_n(\l_t)$ decay exponentially and therefore $\l_t$ is trace-class, meaning that the determinant $\det(\id-z\l_t)$ is defined and is an entire function of $z$ which is given in the form
$$\det(\id -z\l_t)= \sum_{n=0}^{\infty} b_n(t)z^n$$
for $b_n(t) \in \C$. Therefore, denoting $\lambda_n(t)$ to be the $n$th eigenvalue of the operator $\l_t$ and observing that the zeroes of the determinant $\det(\id-z\l_t)$ are the reciprocals of the eigenvalues of $\l_t$ it follows that
\begin{eqnarray}
\sum_{n=0}^{\infty} b_n(t)\lambda_1(t)^{-n}=0. \label{key} \end{eqnarray}
Therefore, provided the coefficients $b_n(t)$ are holomorphic with respect to $t$, we can differentiate (\ref{key}) with respect to $t$ and obtain $\Lambda=\lambda_1'(0)$ in terms of $b_n(0)$ and $b_n'(0)$. The following lemma provides us with an expression for the coefficients $b_n(t)$.

\begin{prop}
For all $t \in \C$, $\l_t$ is trace-class. In particular $\det(\id-z\l_t)$ is an entire function of $z$ and is given in the form
$$\det(\id -z\l_t)= \sum_{n=0}^{\infty} b_n(t)z^n$$
for $b_n(t) \in \C$ where $b_0(t)=1$ for all $t$ and for $n\geq 1$ is defined as
\begin{eqnarray}b_n(t)= \sum_{m=1}^n \frac{(-1)^m}{m!} \sum_{\substack{n_1, \ldots, n_m \in \N^m \\ n_1 + \ldots +n_m=n}} \prod_{i=1}^m \frac{\textnormal{tr} \l_t^{n_i}}{n_i}. \label{trace coef2} \end{eqnarray}
\label{tc prop}
\end{prop}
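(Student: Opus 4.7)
The plan is to decompose the proof into three steps: first, establish the trace-class property of $\l_t$ for every $t \in \C$; second, deduce that the Fredholm determinant $\det(\id - z\l_t)$ is an entire function of $z$; third, extract the combinatorial formula (\ref{trace coef2}) for its Taylor coefficients from the standard identity $\det(\id - zL) = \exp\!\left(-\sum_{k \geq 1} \frac{z^k}{k}\tr L^k\right)$ available for any trace-class $L$.

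For the trace-class property, the key observation is that each holomorphic map $\phi_{A_i}$ sends $D$ strictly inside itself. Indeed, $\phi_{A_i}(D)$ is a disc centred on the real axis with boundary through the points $b_i/(b_i+d_i)$ and $a_i/(a_i+c_i)$, both of which lie in the open interval $(0,1)$, so that $\overline{\phi_{A_i}(D)}$ is a compact subset of $D$. Standard bounds for composition operators on Hardy-Hilbert spaces, which the authors will carry out quantitatively in Section~\ref{est} via Lemma~\ref{l0 sing}, then yield a geometric decay of the approximation numbers $s_n(f \mapsto f \circ \phi_{A_i})$ at a rate dictated by how far inside $D$ the image $\phi_{A_i}(D)$ lies; in particular the composition operator is trace-class. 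Since each weight $w_{A_i}(z)^t = \exp(t \log w_{A_i}(z))$ is bounded and holomorphic on $D$, multiplication by $w_{A_i}^t$ is a bounded operator on $H^2(D)$ with norm at most $\norm{w_{A_i}^t}_\infty$. Combining $s_n(LK) \leq s_n(L)\norm{K}$ with subadditivity of $|\cdot|_{\text{tr}}$ gives $|\l_t|_{\text{tr}} \leq \sum_{i=1}^k p_i \norm{w_{A_i}^t}_\infty \cdot |f \mapsto f\circ \phi_{A_i}|_{\text{tr}} < \infty$, so $\l_t$ is trace-class. The entireness of $\det(\id - z\l_t)$ is then immediate from the general theory cited around (\ref{det}).

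For the coefficient formula, I would use Lidskii's theorem: for $|z|$ small enough that $|z| \norm{\l_t} < 1$, all the series in the computation
\begin{align*}
\log \det(\id - z\l_t) &= \sum_n \log(1 - z\lambda_n(\l_t)) \\
&= -\sum_{k=1}^\infty \frac{z^k}{k}\sum_n \lambda_n(\l_t)^k = -\sum_{k=1}^\infty \frac{z^k}{k}\tr \l_t^k
\end{align*}
converge absolutely, with the last equality using that $\l_t^k$ is trace-class and that Lidskii's theorem identifies its trace with $\sum_n \lambda_n(\l_t)^k$. Exponentiating gives
$$\det(\id - z\l_t) = \sum_{m=0}^\infty \frac{(-1)^m}{m!}\left(\sum_{k=1}^\infty \frac{z^k \tr \l_t^k}{k}\right)^{\!m},$$
and collecting the coefficient of $z^n$ for $n \geq 1$ (with each $n_i \geq 1$ forcing $m \leq n$) yields precisely (\ref{trace coef2}), while the $m=0$ summand contributes $b_0(t) = 1$. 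Since both sides are entire functions of $z$ that agree on a neighbourhood of $0$, the expansion is valid throughout $\C$.

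The principal obstacle is the quantitative trace-class bound in the first step; the combinatorial expansion in the third step and the application of Lidskii's theorem in the second are essentially mechanical once $\l_t$ is known to belong to the trace ideal. I expect the same geometric decay of $s_n(f \mapsto f \circ \phi_{A_i})$ to drive the effective bounds on $|b_n(t)|$ that are needed later to control $|\Lambda - \Lambda_N|$ in Section~\ref{est}.
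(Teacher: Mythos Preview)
Your proposal is correct and follows essentially the same route as the paper: trace-class is deferred to the approximation-number estimates of Section~\ref{est} (the paper cites Lemma~\ref{lt sing} directly, whereas you factor through the unweighted composition operator and multiply by the bounded weight, which is an equally valid variant), and the coefficient formula is obtained by the same Lidskii-plus-exponential computation, just written in the reverse direction. The paper additionally justifies the Fubini step in the double sum via $\sum_n |z|^n |\l_t|_{\text{tr}}\norm{\l_t^{n-1}}<\infty$ and Gelfand's formula rather than $|z|\norm{\l_t}<1$, but this is a cosmetic difference.
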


\begin{proof}
The fact that $\l_t$ is trace-class for each $t$ will follow from Lemma \ref{lt sing}. (\ref{trace coef2}) is a well-known result but we include its proof for completeness. Notice that
\begin{eqnarray*}
\sum_{l=0}^{\infty} \frac{(-1)^l}{l!} \left( \sum_{n=1}^{\infty} \frac{z^n \tr \l_t^n}{n}\right)^l &=& \exp\left(-\sum_{n=1}^{\infty} \frac{ \tr(z\l_t)^n}{n} \right)\\
&=& \exp\left( -\sum_{n=1}^{\infty} \frac{1}{n} \sum_{k=1}^{\infty} (z\lambda_k(\l_t))^n \right) \\
&=& \exp\left( -\sum_{k=1}^{\infty} \sum_{n=1}^{\infty} \frac{(z\lambda_k(\l_t))^n}{n} \right) \\
&=& \prod_{k=1}^{\infty} \exp\left(- \sum_{n=1}^{\infty} \frac{(z\lambda_k(\l_t))^n}{n}\right)\\
&=& \prod_{k=1}^{\infty}\exp(\log(1-z\lambda_k(\l_t)))= \det(1-z\l_t)
\end{eqnarray*}
where the rearrangement on the third line is permitted because $\sum_{n=1}^{\infty} \frac{1}{n} \sum_{k=1}^{\infty} (z\lambda_k(\l_t))^n$ is absolutely summable since if $|z|< |\lambda_1(\l_t)|$ then
$$\sum_{n=1}^{\infty} \sum_{k=1}^{\infty}|(z\lambda_k(\l_t))^n| \leq \sum_{n=1}^{\infty} \sum_{k=1}^{\infty} |z|^n s_k((\l_t)^n)=\sum_{n=1}^{\infty} |z|^n |\l_t^n|_{\text{tr}} \leq \sum_{n=1}^{\infty} |z|^n |\l_t|_{\text{tr}} \norm{\l_t^{n-1}}$$
which converges by Gelfand's formula. The result follows by equating coefficients.
\end{proof}

We next we obtain a simpler expression for the trace of $\l_t^m$. In view of (\ref{trace coef2}) this expression clearly implies that the coefficients $b_n(t)$ are holomorphic in $t$. (The holomorphicity of $b_n$ can alternatively be understood as a consequence of the holomorphicity of the map $t \mapsto \l_t$ as a function taking values in the Banach space of trace-class operators equipped with the trace norm, but that perspective will not be required in our proof.) 

\begin{prop}
For every $t \in \C$ and $m \in \N$,
$$\tr(\l_t^m)= \sum_{A \in \A^m} p_{A} \lambda_1(A)^t \left(1-\frac{\lambda_2(A)}{\lambda_1(A)}\right)^{-1}.$$

\label{trace exp}
\end{prop}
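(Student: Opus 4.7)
The plan is to iterate the definition of $\l_t$ to obtain the branch decomposition
$$\l_t^m = \sum_{A\in\A^m} p_A\, C_{A,t}, \qquad C_{A,t}f = w_A^t\cdot(f\circ\phi_A),$$
so that by linearity of the trace (each $C_{A,t}$ being trace-class by the same bounds that will be used in the forthcoming Lemma~\ref{lt sing}) it suffices to prove the single-branch identity
$$\tr(C_{A,t}) = \lambda_1(A)^t\bigl(1 - \lambda_2(A)/\lambda_1(A)\bigr)^{-1}$$
for each $A\in\A^m$.

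I would split the remaining work into an algebraic step and a spectral step. For the algebraic step, writing $A=\bigl(\begin{smallmatrix}a&b\\c&d\end{smallmatrix}\bigr)$ with Perron eigenvector $(v_1,v_2)$, the fixed point of $\phi_A$ is $z_A = v_1/(v_1+v_2)\in(0,1)$, and combining the component equations $av_1+bv_2 = \lambda_1(A)v_1$, $cv_1+dv_2 = \lambda_1(A)v_2$ with the formula $w_A(z)=(a+c-b-d)z + b+d$ gives $w_A(z_A)=\lambda_1(A)$. A direct application of the quotient rule to the Möbius expression for $\phi_A$ yields the identity $\phi_A'(z) = \det(A)/w_A(z)^2$; evaluating at $z_A$ and using $\det(A)=\lambda_1(A)\lambda_2(A)$ gives $\phi_A'(z_A) = \lambda_2(A)/\lambda_1(A)$, which is of modulus strictly less than $1$ by the strict form of Perron--Frobenius for positive matrices.

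The spectral step is to identify the non-zero spectrum of $C_{A,t}$ on $H^2(D)$ as $\{w_A(z_A)^t\,\phi_A'(z_A)^n : n\ge 0\}$; granted this, Lidskii's theorem yields
$$\tr(C_{A,t}) = \sum_{n=0}^{\infty} w_A(z_A)^t\,\phi_A'(z_A)^n = \frac{w_A(z_A)^t}{1 - \phi_A'(z_A)},$$
and substituting the algebraic identities completes the proof. To obtain the spectral description I would invoke Koenigs' linearization: since $\phi_A(D)\subset D$ and $\phi_A$ has attracting interior fixed point $z_A$, there is a holomorphic $\sigma_A:D\to\C$ with $\sigma_A(z_A)=0$, $\sigma_A'(z_A)=1$ and $\sigma_A\circ\phi_A = \phi_A'(z_A)\,\sigma_A$. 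Letting $\phi_A^{\circ k}$ denote the $k$-fold iterate of $\phi_A$ as a self-map of $D$ and setting $h_A(z) = \prod_{k=0}^{\infty} (w_A(z_A)/w_A(\phi_A^{\circ k}(z)))^t$, the product converges to a nowhere vanishing holomorphic function on $D$, since the Hilbert-metric contraction of $\phi_A$ forces $\phi_A^{\circ k}(z)\to z_A$ exponentially fast uniformly on $D$ and hence the factors tend to $1$ geometrically. The resulting $f_n = h_A\cdot\sigma_A^n$ lie in $H^2(D)$ and satisfy $C_{A,t} f_n = w_A(z_A)^t\,\phi_A'(z_A)^n f_n$; a local Taylor expansion at $z_A$ (reading the eigenvalue off the lowest-order coefficient) shows that any eigenfunction must be proportional to some $f_n$, ruling out further non-zero eigenvalues.

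The main obstacle is the spectral identification of the weighted composition operator $C_{A,t}$: one must rigorously combine Koenigs' theorem with the Hilbert-metric contraction of $\phi_A$ to ensure that the cocycle $h_A$ defines a genuine $H^2(D)$ element, and then rule out extraneous non-zero spectrum via the local-expansion argument at $z_A$. The algebraic reduction of $w_A(z_A)$ and $\phi_A'(z_A)$ to $\lambda_1(A)$ and $\lambda_2(A)/\lambda_1(A)$ is then purely formal.
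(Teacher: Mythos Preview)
Your decomposition and algebraic step match the paper exactly (the paper computes $\phi_A'(z_A)$ via the adjugate of $A$ rather than the quotient-rule identity $\phi_A'(z)=\det(A)/w_A(z)^2$, but the conclusion is the same). The real divergence is in the spectral step: the paper does not construct eigenfunctions or invoke Koenigs at all, but simply cites the trace formula of Bandtlow--Jenkinson \cite[Theorem~4.2]{bj} (extending Ruelle's formula to Hardy--Hilbert spaces), which gives $\tr(\l_{A,t}) = w_A(z_A)^t/(1-\phi_A'(z_A))$ directly for any weighted composition operator with holomorphic weight and contraction $\phi_A$ mapping $D$ strictly inside itself. Your route amounts to reproving that theorem in this special case, which is more self-contained but considerably more work.

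Two cautions if you pursue your route. First, the ``Hilbert-metric contraction'' does not apply on the complex disc $D$: the Hilbert metric lives on $\R\P^1_+$, i.e.\ on $(0,1)$. What you need for uniform exponential convergence of $\phi_A^{\circ k}(z)\to z_A$ on $D$ is Schwarz--Pick together with the fact that $\phi_A(D)$ is precompact in $D$ (so after one iterate you are on a compact set). Second, your claim $f_n=h_A\sigma_A^n\in H^2(D)$ requires $\sigma_A$ to be bounded on $D$; for a general holomorphic self-map this can fail, but here $\phi_A$ is a M\"obius transformation with second fixed point outside $\overline{D}$, so $\sigma_A$ is (a normalisation of) $(z-z_A)/(z-z_A')$ and hence bounded. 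You should also confirm algebraic, not merely geometric, simplicity of the eigenvalues for Lidskii to give the correct sum; the local-expansion argument as stated only yields the latter.
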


\begin{proof}
For each $m \in \N$, $A \in \A^m$ and $ t \in \C$ let $\l_{A, t}f= w_{A}^t \cdot f \circ \phi_A$ so that $\l_t= \sum_{A \in \A} p_{A} \l_{A,t}.$ Let $z_{A}$ denote the unique fixed point of $\phi_{A}$. Let $\lambda_1(A)$ and $\lambda_2(A)$ denote the eigenvalues of the matrix $A$. We begin by showing that $\frac{\textup{d}}{\textup{d}z}\phi_{A}(z_{A})= \frac{\lambda_2(A)}{\lambda_1(A)}.$ Let $A= \begin{pmatrix} a&b \\ c&d \end{pmatrix}$ so that 
$$\frac{\textup{d}}{\textup{d}z} \phi_{A}(z_{A})= \frac{a-b-z_{A}(a+c-b-d)}{(a+c-b-d)z_{A}+b+d}.$$
Clearly the denominator is 
$$(a+c-b-d)z_{A}+b+d=w_{A}(z_{A})= \left(A \begin{pmatrix} z_{A} \\1-z_{A} \end{pmatrix}\right) \cdot  \begin{pmatrix} 1\\1 \end{pmatrix}= \lambda_1(A) \begin{pmatrix} z_{A} \\1-z_{A} \end{pmatrix}\cdot  \begin{pmatrix} 1\\1 \end{pmatrix}=\lambda_1(A).$$
The numerator is given by
\begin{eqnarray*}
a-b-z_{A}(a+c-b-d)&=&\begin{pmatrix} d&-b \\-c&a \end{pmatrix} \begin{pmatrix} z_{A} \\1-z_{A} \end{pmatrix} \cdot  \begin{pmatrix} 1\\1 \end{pmatrix} = \det(A) A^{-1} \begin{pmatrix} z_{A} \\1-z_{A} \end{pmatrix} \cdot  \begin{pmatrix} 1\\1 \end{pmatrix} \\
&=& \frac{\lambda_1(A) \lambda_2(A)}{\lambda_1(A)} A^{-1} A\begin{pmatrix} z_{A} \\1-z_{A} \end{pmatrix} \cdot  \begin{pmatrix} 1\\1 \end{pmatrix} =\lambda_2(A)
\end{eqnarray*}
 Therefore by \cite[Theorem 4.2]{bj} \footnote{ The trace formula (\ref{tf}) was first obtained by Ruelle \cite{ruelle} for operators that acted on the Banach space of holomorphic functions on $D$ that extend continuously to the closure of $D$. In \cite{bj} Bandtlow and Jenkinson showed that the same formula holds for operators on more general spaces of analytic functions, including Bergman spaces and the Hardy-Hilbert spaces.} the trace of $\l_{A, t}$ is given by
\begin{eqnarray}
\tr(\l_{A,t})=\frac{\lambda_1(A)^t}{1-\frac{\lambda_2(A)}{\lambda_1(A)}}. \label{tf}
\end{eqnarray}
Since $\tr\left(\sum_{A \in \mathcal{A}^m} \l_{A,t}\right)=\sum_{A \in \mathcal{A}^m} \tr(\l_{A,t})$, the result follows.
\end{proof}

By (\ref{trace coef2}) and Proposition \ref{trace exp} it is clear to see that for each $t \in \C$ and $n \in \N$ the coefficients $b_n(t)$ can be computed explicitly and require one to calculate all possible random products of $m$ matrices from $\{A_1, \ldots, A_k\}$ for each $1 \leq m \leq n$. It also immediately follows from (\ref{trace coef2}) and Proposition \ref{trace exp} that for each $n \in \N$, $b_n(t)$ is holomorphic in $t$. By combining these facts with Proposition \ref{pre main prop}, we can establish the link between the Lyapunov exponent and the determinant of $\l_t$.

\begin{prop}
Let $\det(\id -z\l_t)= \sum_{n=0}^{\infty} b_n(t)z^n$ as before. Then
\begin{eqnarray}
\Lambda= \lambda_1'(0)= \frac{\sum_{n=0}^{\infty} b_n'(0)}{\sum_{n=0}^{\infty} nb_n(0)}. \label{main2}
\end{eqnarray} \label{main prop} \end{prop}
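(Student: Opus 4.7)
The plan is to view the Fredholm determinant as a bivariate holomorphic function $F(z,t):=\det(\id-z\l_t)=\sum_{n=0}^{\infty}b_n(t)z^n$ and to differentiate the identity $F(\lambda_1(t)^{-1},t)=0$ implicitly at $t=0$. Since Proposition \ref{pre main prop} already supplies $\Lambda=\lambda_1'(0)$, only the second equality in (\ref{main2}) requires proof.

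The first step is to explain why $z=1$ is a simple zero of the entire function $z\mapsto F(z,0)$. By Proposition \ref{simple}, $\lambda_1(0)=1$ is an algebraically simple eigenvalue of $\l_0$ and is the unique eigenvalue of modulus $1$; since by (\ref{det}) the roots of the Fredholm determinant are precisely the reciprocals of the eigenvalues with multiplicities equal to the algebraic multiplicities, the zero at $z=1$ of $F(\cdot,0)$ is therefore simple. Consequently $\partial_zF(1,0)=\sum_{n=1}^{\infty}nb_n(0)\neq 0$, which is exactly the statement that the denominator in (\ref{main2}) is non-zero and legitimises the division.

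Next I would verify that $F$ is jointly holomorphic in a bidisc around $(1,0)$, so that termwise differentiation in $t$ is justified. By Proposition \ref{trace exp}, each $\tr\l_t^m$ is a finite sum of exponentials $\lambda_1(A)^t$ and hence entire in $t$, so each $b_n(t)$ is entire in $t$ by (\ref{trace coef2}). The effective bounds on the approximation numbers $s_n(\l_t)$ to be established in \S\ref{est}, combined with the coefficient estimate (\ref{sing coef}), yield an estimate of the shape $|b_n(t)|\le c_n$ with $c_n$ decaying super-exponentially, uniformly for $t$ in a small disc around $0$; the same bounds apply to $b_n'(t)$ via Cauchy's formula. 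The Weierstrass M-test then gives uniform convergence of $\sum_nb_n(t)z^n$ and $\sum_nb_n'(t)z^n$ on compact subsets of the relevant bidisc, so $F$ is jointly holomorphic there and one may differentiate the series termwise in both variables.

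Finally, (\ref{key}) gives $F(\lambda_1(t)^{-1},t)=0$ identically for $t$ in a neighbourhood of $0$. Differentiating in $t$ and using $\lambda_1(0)=1$, so that $\tfrac{d}{dt}\lambda_1(t)^{-1}\bigr|_{t=0}=-\lambda_1'(0)$, produces
$$-\lambda_1'(0)\,\partial_zF(1,0)+\partial_tF(1,0)=0,$$
and substituting $\partial_zF(1,0)=\sum_{n=0}^{\infty}nb_n(0)$ and $\partial_tF(1,0)=\sum_{n=0}^{\infty}b_n'(0)$ yields the claimed identity. The main technical obstacle is the uniform control needed to justify termwise $t$-differentiation of the determinant expansion; this is not self-contained here and relies essentially on the approximation-number bounds developed in \S\ref{est}.
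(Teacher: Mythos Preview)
Your proposal is correct and follows essentially the same route as the paper: both differentiate the identity $\sum_{n}b_n(t)\lambda_1(t)^{-n}=0$ at $t=0$, justify the interchange of sum and derivative via the uniform super-exponential bound on $|b_n(t)|$ coming from the approximation-number estimates of \S\ref{est} together with Cauchy's formula, and use the algebraic simplicity of $\lambda_1(0)=1$ (Proposition~\ref{simple}) to ensure $\sum_n nb_n(0)\neq 0$. Your packaging of the argument as implicit differentiation of the bivariate function $F(z,t)$ is a cosmetic difference only; the paper carries out the same chain-rule computation directly on the series.
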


\begin{proof}
The first equality follows from Proposition \ref{pre main prop}. To deduce the second equality in (\ref{main2}) observe that since the zeroes of the determinant $\det(\id-z\l_t)$ are the reciprocals of the eigenvalues of $\l_t$,
\begin{eqnarray}\sum_{n=0}^{\infty} b_n(t)\lambda_1(t)^{-n}=0.\label{root} \end{eqnarray} 
It will follow from (\ref{bnt}) that $|b_n(t)|=O(\exp(-cn^2))$ uniformly on $U$ and therefore by applying the Cauchy integral formula we deduce that the partial sums $\sum_{n=1}^N b_n'(t)-nb_n(t)\lambda_1'(t)$ converge uniformly on compact subsets of $U$ as $N \to \infty$. Therefore we can differentiate (\ref{root}) and take derivatives inside the summation to obtain 
\begin{eqnarray}
0= \frac{\textup{d}}{\textup{d}t} \left(\sum_{n=0}^{\infty} b_n(t) \lambda_1(t)^{-n} \right) \biggr|_{t=0}= \sum_{n=0}^{\infty} b_n^{\prime}(0)-nb_n(0)\lambda_1'(0). \label{ting} \end{eqnarray}
Since $(\lambda_1(0))^{-1}$ is a simple zero of $\det(\id-z \l_0)$, it follows that $\sum_{n=0}^{\infty} nb_n(0) \neq 0$ and so by rearranging (\ref{ting}) we obtain
$$\lambda_1'(0)= \frac{\sum_{n=0}^{\infty} b_n'(0)}{\sum_{n=0}^{\infty} nb_n(0)}$$
which completes the proof.
\end{proof}

Since as it was noted earlier $b_n(t)$ can be computed for small $n$ (meaning that by the Cauchy integral formula $b_n'(0)$ can also be computed for small $n$), (\ref{main2}) provides us with natural candidates for approximating $\Lambda$ given by
\begin{equation}
\Lambda_N=\frac{\sum_{n=0}^{N} b_n'(0)}{\sum_{n=0}^{N} nb_n(0)}. \label{approx}
\end{equation}
Observe that the approximation $\Lambda_N$ of $\Lambda$ corresponds to truncating the determinant $\det(\id-z\l_t)$ after $N+1$ terms, yielding
$$\sum_{n=0}^N b_n(t) \lambda_1(t)^{-n} \approx 0,$$
followed by differentiating at $t=0$ and solving for $\lambda_1'(0)$.

Now, using Proposition \ref{trace exp} we can  define
$$t_m:= \tr(\l_0^m)=\sum_{A \in \A^m} p_{A}\left(1-\frac{\lambda_2(A)}{\lambda_1(A)}\right)^{-1}$$
and
$$\tau_m:= \frac{\textup{d}}{\textup{d}t} \tr(\l_t^m) \Bigr|_{t=0}= \sum_{A \in \A^m} p_{A}\log \lambda_1(A)\left(1-\frac{\lambda_2(A)}{\lambda_1(A)}\right)^{-1}.$$
Then by (\ref{trace coef2}) and (\ref{approx}) we see that $b_n'(0)=\alpha_n$ and $b_n(0)=a_n$ and therefore $\Lambda_N$ is given as in Theorem \ref{main}.

\subsection{Effective estimates}
 
It remains for us to obtain explicit bounds on the error $|\Lambda_N- \Lambda|$. Evaluating this difference we see that
\begin{eqnarray}
|\Lambda_N - \Lambda| = \left| \frac{ \sum_{n=0}^N b_n'(0) \sum_{n=0}^{\infty} nb_n(0) - \sum_{n=0}^{\infty} b_n'(0) \sum_{n=0}^N nb_n(0) }{\sum_{n=0}^N nb_n(0) \sum_{n=0}^{\infty} nb_n(0)} \right|. \label{diff}
\end{eqnarray}
The denominator can be written as 
\begin{eqnarray}\sum_{n=0}^N nb_n(0) \sum_{n=0}^{\infty} nb_n(0) =\left(\sum_{n=0}^{\infty} nb_n(0)- \sum_{n=N+1}^{\infty} nb_n(0)\right) \sum_{n=0}^{\infty} nb_n(0) \label{denom} \end{eqnarray}
therefore we need an upper bound on $|b_n(0)|$ and a lower bound on $\left|\sum_{n=1}^{\infty} nb_n(0)\right|$. The numerator can be written as
\begin{align}
\sum_{n=0}^N b_n'(0) \sum_{n=0}^{\infty} nb_n(0) - \sum_{n=0}^{\infty} b_n'(0) \sum_{n=0}^N nb_n(0) &=&  \left( \sum_{n=0}^{\infty} b'_n(0)-\sum_{n=N+1}^{\infty} b'_n(0)\right) \sum_{n=0}^{\infty} nb_n(0) \nonumber \\ & & - \sum_{n=0}^{\infty} b'_n(0)\left( \sum_{n=0}^{\infty} nb_n(0)-\sum_{n=N+1}^{\infty} nb_n(0)\right) \label{num}
\end{align}
and therefore we also need upper bounds on $|b_n'(0)|$.

In summary, we are looking for effective \emph{upper bounds} on $|b_n(0)|$ and $|b_n'(0)|$ and an effective \emph{lower bound} on $|\sum_{n=1}^{\infty}n b_n(0)|$.

In order to estimate $b_n(0)$ recall that by (\ref{sing coef}), 
\begin{eqnarray}
|b_n(t)| \leq \sum_{i_1< \ldots< i_n} |\lambda_{i_1}(\l_t) \ldots \lambda_{i_n}(\l_t)| \leq \sum_{i_1< \ldots < i_n} s_{i_1}(\l_t) \ldots s_{i_n}(\l_t).\label{sing bound}
\end{eqnarray}
Therefore we will obtain explicit upper bounds on $b_n(0)$ by estimating the approximation numbers $s_n(\l_0)$.

Let $l>0$. In order to estimate $|b_n'(0)|$ recall that by the Cauchy integral formula
$$b_n^{\prime}(0)= \frac{1}{2\pi i} \int_{S^{l}} \frac{b_n(t)}{t^2} \textup{d}t$$
where $S^{l}$ denotes the circle of radius $l>0$ centred at 0. Therefore, an upper bound on $|b_n'(0)|$ corresponds to an upper bound on $|b_n(t)|$ for $|t|=l$, which in view of (\ref{sing bound}) boils down to estimating the approximation numbers $s_n(\l_t)$ for $|t|=l$. Since $b_n(t)$ is holomorphic for all $t$, in principle $l$ can be chosen to be any real number. However in practice we will choose it in such a way that we minimise the upper bound on $|b_n'(0)|$.

Finally, for the lower estimate on $|\sum_{n=1}^{\infty} nb_n(0)|$ observe that
\begin{eqnarray}\left|\sum_{n=1}^{\infty} nb_n(0)\right|= \left|\frac{\textup{d}}{\textup{d}z} \sum_{n=0}^{\infty} b_n(0)z^n \biggr|_{z=1} \right| = \left|\frac{\textup{d}}{\textup{d}z} \prod_{n=1}^{\infty}(1-z\lambda_n(0)) \biggr|_{z=1}\right|=\prod_{n=2}^{\infty}|1-\lambda_n(0)|
\label{sum bound}
\end{eqnarray}
where derivatives can be taken outside of the summation due to uniform convergence of the partial sums $\sum_{n=1}^N nb_n(0)z^{n-1}$ for $|z|<\epsilon<1$ and the final equality follows by the chain rule and the fact that $\lambda_1(0)=1$. For sufficiently large $n$ we'll apply (\ref{product ineq}) to deduce that 
$$\prod_{i=1}^n |\lambda_n(\l_0)| \leq \prod_{i=1}^n s_n(\l_0)$$
which will allow us to use our approximation number estimates to obtain a lower bound for all sufficiently large terms in the product on the right hand side of (\ref{sum bound}). For small $n$, we'll bound $|1-\lambda_n(0)| \geq 1-|\lambda_n(0)|\geq 1-|\lambda_2(0)|$, which means we need to obtain an explicit upper bound (which is strictly less than 1) for the second eigenvalue of $\l_0$.

Therefore the efficiency of the algorithm essentially depends on the eigenvalues $\{\lambda_n(t)\}_{n \in \N}$ which in turn depend on the singular values $\{s_n(\l_t)\}_{n \in \N}$. We will see in the next section that both of these are decaying exponentially at the rate $O(r^n)$.

\section{Estimates} \label{est}

\subsection{Estimates on $|b_n(0)|$ and $|b_n'(0)|$} 

We begin by estimating the approximation numbers of the operator $\l_0$.

\begin{lma}
Let $r$ be given by (\ref{r}). For every $n \in \N$,
$$s_{n+1}(\l_0) \leq \frac{1}{\sqrt{1-r^2}} r^n.$$
\label{l0 sing}
\end{lma}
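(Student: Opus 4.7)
The plan is to produce an explicit rank-$n$ operator that approximates $\l_0$ in operator norm to within $\frac{r^n}{\sqrt{1-r^2}}$; this is enough, by the definition of the approximation numbers. The natural candidate is $K_n := \l_0 T_n$, where $T_n : H^2(D) \to H^2(D)$ is the Taylor projection
\[
T_n f(z) = \sum_{k=0}^{n-1} \alpha_k(f) \frac{(z-1/2)^k}{(1/2)^k}
\]
onto the polynomials of degree at most $n-1$. Since $T_n$ has rank $n$ and $\l_0$ is bounded, $\l_0 T_n$ also has rank at most $n$, and it remains to estimate $\|\l_0 - \l_0 T_n\| = \sup_{\|f\|_{H^2} \le 1}\|\l_0(f-T_n f)\|_{H^2}$.

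The crucial step is a geometric observation: each $\phi_{A_i}$ maps $D$ into the \emph{concentric} sub-disc of radius $r_i/2$ centred at $1/2$, where $r_i := (R_i-1)/(R_i+1) \le r$. This follows from the fact (recorded in \S\ref{op}) that $\phi_{A_i}(D)$ is an open disc whose boundary passes through $\phi_{A_i}(0) = b_i/(b_i+d_i)$ and $\phi_{A_i}(1) = a_i/(a_i+c_i)$, and is symmetric about the real axis; so the segment between these two points is a diameter. The defining bounds $R_i^{-1} \le a_i/c_i,\, b_i/d_i \le R_i$ then force both endpoints into $[1/(R_i+1),\,R_i/(R_i+1)] = [1/2 - r_i/2,\,1/2 + r_i/2]$, so the whole image disc lies in the concentric disc of radius $r_i/2$ about $1/2$. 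Using that $H^2(D)$-norms are dominated by sup-norms on $D$, this yields the basis-element bound
\[
\Bigl\|\tfrac{(\phi_{A_i} - 1/2)^k}{(1/2)^k}\Bigr\|_{H^2} \le \Bigl\|\tfrac{(\phi_{A_i} - 1/2)^k}{(1/2)^k}\Bigr\|_\infty \le r_i^k \le r^k \qquad (k\ge 0).
\]

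Given the basis bound, the remainder is a routine Taylor-tail calculation. For $\|f\|_{H^2} \le 1$, I would expand $f - T_n f = \sum_{k \ge n} \alpha_k(f) (z-1/2)^k/(1/2)^k$, pre-compose with $\phi_{A_i}$, and combine the triangle and Cauchy-Schwarz inequalities:
\[
\|(f - T_n f) \circ \phi_{A_i}\|_{H^2} \le \sum_{k \ge n} |\alpha_k(f)|\, r^k \le \Bigl(\sum_{k \ge n}|\alpha_k(f)|^2\Bigr)^{1/2}\Bigl(\sum_{k \ge n} r^{2k}\Bigr)^{1/2} \le \frac{r^n}{\sqrt{1-r^2}}.
\]
Applying the triangle inequality one more time with the convex weights $p_i$ (which sum to $1$) gives $\|\l_0(f - T_n f)\|_{H^2} \le r^n/\sqrt{1-r^2}$ uniformly in $f$, yielding the claimed bound on $s_{n+1}(\l_0)$.

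The only non-routine ingredient is the geometric identification of $r_i/2$ as the correct contraction constant for $\phi_{A_i}$ viewed as a self-map of the concentric disc $D$; once that is in hand, the Taylor-tail argument is the standard device for bounding approximation numbers of composition operators on Hardy-Hilbert spaces. I do not expect any subtlety from the averaging over $i$ since all bounds are uniform in $i$ and the $p_i$ enter only as a convex combination.
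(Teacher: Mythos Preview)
Your proof is correct and is essentially identical to the paper's own argument: the paper defines the same rank-$N$ approximant $\l_0^{(N)}f=\sum_{n=0}^{N-1}\sum_i p_i\alpha_n(f)\bigl(2(\phi_{A_i}(z)-\tfrac12)\bigr)^n$ (which is exactly your $\l_0 T_N$), uses the same geometric observation that $\phi_{A_i}(D)$ lies in the concentric disc of radius $r_i/2\le r/2$ about $\tfrac12$, and concludes with the same Cauchy--Schwarz tail estimate. The only cosmetic difference is that the paper bounds $\|\l_0 f-\l_0^{(N)}f\|_\infty$ in one step rather than first splitting over $i$.
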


\begin{proof}
Since $\phi_{A_i}$ is a linear fractional transformation, $\phi_{A_i}(D)=D_i$ where $D_i$ is the disk centred in the real axis whose boundary passes through $\frac{b_i}{b_i+d_i}$ and $\frac{a_i}{a_i+c_i}$. Let $f \in H^2(D)$ so that
$$f(z)= \sum_{n=0}^{\infty} 2^n\alpha_n(f) \left(z-\frac{1}{2}\right)^n.$$
Then 
$$f\circ \phi_{A_i}(z)= \sum_{n=0}^{\infty} 2^n\alpha_n(f)\left(\phi_{A_i}(z)-\frac{1}{2}\right)^n.$$
Notice that $|\phi_{A_i}(z)-\frac{1}{2}| \leq \frac{R_i}{R_i+1}-\frac{1}{2}$ where $R_i$ was defined in (\ref{R}). Put $u_{0,i}(z)=1$,
$$u_{1,i}(z)=2 (\phi_{A_i}(z)-\frac{1}{2})$$
and $u_{n, i}(z)=(u_{1,i}(z))^n$. It is easy to see that $u_{1, i}(D) \subseteq B(0,r)$. It follows that
$$f \circ \phi_{A_i}(z)= \sum_{n=0}^{\infty} \alpha_n(f) u_{n,i}(z).$$
Now, put 
$$\l_0^{(N)} f=\sum_{n=0}^{N-1} \sum_{i=1}^k p_i \alpha_n(f) u_{n,i}(z).$$
$\l_0^{(N)}$ clearly has rank not greater than $N$ and
\begin{eqnarray*}
\norm{\l_0 f-\l_0^{(N)}f}_{H^2} &\leq& \norm{\l_0 f-\l_0^{(N)}f}_{\infty} \\
&\leq& \sum_{n=N}^{\infty} \sum_{i=1}^k p_i |\alpha_n(f)| \norm{u_{n,i}}_{\infty} \\
&\leq& \left(\sum_{n=N}^{\infty} \alpha_n(f)^2 \right)^{\frac{1}{2}} \left( \sum_{n=N}^{\infty} \left(\sum_{i=1}^k p_i\norm{u_{n, i}}_{\infty}\right)^2\right)^{\frac{1}{2}} \\
&\leq&  \norm{f}_{H^2} \left(\sum_{n=N}^{\infty} r^{2n} \right)^{\frac{1}{2}} \\
&\leq&  \frac{r^N}{\sqrt{1-r^2}}\norm{f}_{H^2}
\end{eqnarray*}
where the third inequality follows by H\"older's inequality. Therefore
$$s_{n+1}(\l_0) \leq  \frac{1}{\sqrt{1-r^2}}r^n  $$
completing the proof.

\end{proof}

It is now easy to estimate the approximation numbers of $\l_t$ for any $|t| \geq 0$.

\begin{lma}
Let $r$ be given by (\ref{r}), $C_1$ be given by (\ref{c1}) and $\theta$ be given by (\ref{theta}). Denote
\begin{eqnarray}
C_2=\sqrt{(\log C_1)^2+\theta^2}. \label{c2}
\end{eqnarray}
Then for any $|t|\geq 0$,
$$s_{n+1}(\l_t) \leq \frac{1}{\sqrt{1-r^2}} r^ne^{C_2|t|}.$$
\label{lt sing}
\end{lma}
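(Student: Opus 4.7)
The plan is to mimic the proof of Lemma \ref{l0 sing} almost verbatim, with one new ingredient: a uniform $L^{\infty}$ bound on the multiplier $w_{A_i}^t$ over $D$, which accounts for the exponential factor $e^{C_2|t|}$. Since $\l_t f = \sum_i p_i w_{A_i}^t \cdot (f\circ \phi_{A_i})$ differs from $\l_0 f$ only by the insertion of the bounded holomorphic multipliers $w_{A_i}^t$, the rank-$N$ truncation strategy should still work.

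Concretely, writing $f(z) = \sum_{n=0}^{\infty} 2^n \alpha_n(f)(z - 1/2)^n \in H^2(D)$ and, as in the proof of Lemma \ref{l0 sing}, $f\circ \phi_{A_i}(z) = \sum_{n=0}^{\infty} \alpha_n(f) u_{n,i}(z)$ with $\|u_{n,i}\|_{\infty} \leq r^n$, I would define
$$\l_t^{(N)} f = \sum_{n=0}^{N-1}\sum_{i=1}^{k} p_i \, \alpha_n(f)\, w_{A_i}^t\, u_{n,i}.$$
This factors through $\C^N$ via $f \mapsto (\alpha_0(f),\ldots,\alpha_{N-1}(f))$, so $\l_t^{(N)}$ has rank at most $N$, making it a valid competitor for $s_{N+1}(\l_t)$.

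The key step is to prove that $\|w_{A_i}^t\|_{\infty} \leq e^{C_2 |t|}$ on $D$. For this, recall that $w_{A_i}$ is affine, so $w_{A_i}(D)$ is the open disc with diameter joining $b_i+d_i$ and $a_i+c_i$ on the positive real axis. By (\ref{bounds}) and the definition (\ref{c1}) of $C_1$ we have $\left|\log |w_{A_i}(z)|\right| \leq \log C_1$ on $D$, while an elementary geometric computation (the maximum argument of a point in a disc with real centre equals $\arcsin$ of the radius over the distance of the centre from the origin) gives $|\Arg w_{A_i}(z)| \leq \theta$ from (\ref{theta}). Writing $\log w_{A_i}(z) = u + iv$, these yield $|\log w_{A_i}(z)| = \sqrt{u^2+v^2} \leq C_2$, whence
$$|w_{A_i}(z)^t| = \exp(\Re(t\log w_{A_i}(z))) \leq \exp(|t|\cdot|\log w_{A_i}(z)|) \leq e^{C_2 |t|}.$$

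With this bound in hand, the remainder is a direct copy of the Cauchy--Schwarz estimate used in Lemma \ref{l0 sing}: for every $f \in H^2(D)$,
\begin{align*}
\|\l_t f - \l_t^{(N)} f\|_{H^2} &\leq \|\l_t f - \l_t^{(N)} f\|_{\infty} \leq \sum_{n=N}^{\infty} |\alpha_n(f)|\sum_{i=1}^{k} p_i \|w_{A_i}^t u_{n,i}\|_{\infty}\\
&\leq e^{C_2 |t|}\sum_{n=N}^{\infty} |\alpha_n(f)|\, r^n \leq e^{C_2 |t|}\|f\|_{H^2}\Bigl(\sum_{n=N}^{\infty} r^{2n}\Bigr)^{1/2} \leq \frac{r^N e^{C_2 |t|}}{\sqrt{1-r^2}}\|f\|_{H^2}.
\end{align*}
Taking $N=n$ and using the definition of the approximation numbers gives the claimed estimate. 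The main (and really only) obstacle is the $L^{\infty}$ bound on $w_{A_i}^t$; once this is established, the functional-analytic mechanics of the previous lemma transfer without change.
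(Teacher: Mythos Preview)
Your proposal is correct and follows essentially the same approach as the paper: the same rank-$N$ truncation $\l_t^{(N)}$, the same geometric computation bounding $|\Arg w_{A_i}(z)|$ by $\theta$ via the tangent line to the disc $w_{A_i}(D)$, and the same Cauchy--Schwarz tail estimate. Your derivation of $|w_{A_i}(z)^t|\le e^{C_2|t|}$ via $|\log w_{A_i}(z)|\le C_2$ is a slightly more compact packaging of the paper's inequality $|\Re(t)|\log C_1+|\Im(t)|\theta\le |t|C_2$, but the content is identical.
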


\begin{proof}
Put
$$\l_t^{(N)} f= \sum_{n=0}^{N-1} \sum_{j=1}^k   p_j ((a_j+c_j-b_j-d_j)z+b_j+d_j)^t \alpha_n(f) u_{n,j}(z)$$
which is an operator of rank at most $N$. Similarly to before,
\begin{eqnarray*}
\norm{\l_t f-\l_t^{(N)}f}_{H^2} &\leq&  \frac{r^N}{\sqrt{1-r^2}}\norm{f}_{H^2} \sup_{1 \leq j \leq k} \sup_{z \in D}  \left|((a_j+c_j-b_j-d_j)z+b_j+d_j)^t\right|. \\
\end{eqnarray*}
Note that $\left|((a_j+c_j-b_j-d_j)z+b_j+d_j)^t\right|=|w_{A_j}(z)^t|$. Let $\Im(z)$ denote the imaginary part of $z$ and $\Arg(z)$ denote the argument of $z$. Then
\begin{eqnarray*}
w_{A_j}(z)^t &=& \exp((\Re(t)+i\Im(t))(i\Arg(w_{A_j}(z))+\log|w_{A_j}(z)|)) \\
&=& |w_{A_j}(z)|^{\Re(t)} \exp(-\Im(t)\Arg(w_{A_j}(z)))|w_{A_j}(z)|^{i\Im(t)} \exp(i \Re(t)\Arg(w_{A_j}(z))).
\end{eqnarray*}
At this point we could use the fact that $\Arg(w_{A_j}(z)) \in (-\frac{\pi}{2}, \frac{\pi}{2})$ to obtain the bound
$$\sup_{1 \leq j \leq k}\sup_{z \in D} |w_{A_j}(z)^t| \leq \exp\left(|t|\sqrt{(\log C_1)^2+\frac{\pi^2}{4}}\right)$$
 by using (\ref{bounds}) and the Cauchy-Schwarz inequality. Instead we choose to optimise this bound by obtaining an improved upper estimate on $\theta_j=\sup_{z \in D}| \Arg(w_{A_j}(z))|$. Since $w_{A_j}(D)$ is a disk which is centred in the real line whose boundary $S$ passes through the points $a_j+c_j$ and $b_j+d_j$, $\theta_j$ will correspond to the angle between the real axis and the unique tangent to the circle $S$ which passes through the origin and has a positive gradient. In particular, since the midpoint of $w_{A_j}(D)$ is $\frac{a_j+b_j+c_j+d_j}{2}$ and $w_{A_j}(D)$ has radius $\frac{|a_j+c_j-b_j+d_j|}{2}$ it follows that 
$$\sin \theta_j=\frac{|a_j+c_j-b_j+d_j|}{a_j+b_j+c_j+d_j}$$
and therefore for all $z \in D$ and $1 \leq j \leq k$,
$$\Arg w_{A_j}(z) \in (-\theta, \theta).$$
In particular by  (\ref{bounds}) and the Cauchy-Schwarz inequality,
$$|w_{A_j}(z)^t| \leq \exp(|\Re(t)| \log |w_{A_j}(t)|+|\Im(t)|\theta) \leq \exp\left(|t|\sqrt{(\log C_1)^2+\theta^2}\right)$$
and therefore
$$\norm{\l_t f-\l_t^{(N)}f}_{H^2} \leq \frac{r^N}{\sqrt{1-r^2}}\norm{f}_{H^2}  e^{C_2|t|}.$$

\end{proof}

Using Lemma \ref{l0 sing} we can obtain an upper bound on $|b_n(0)|$. 

\begin{lma}
Let 
$$C_{0}=\frac{1}{r\sqrt{1-r^2}}.$$
Then for all $n \in \N$,
$$|b_n(0)| \leq \frac{C_{0}^nr^{\frac{n(n+1)}{2}}}{\prod_{i=1}^n (1-r^i)}.$$
\label{taylor bound}
\end{lma}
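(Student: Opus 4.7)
The plan is to combine the inequality (\ref{sing bound}) with the approximation number estimate of Lemma \ref{l0 sing} and then reduce the resulting multi-index sum to a standard partition generating function. Specifically, starting from
\[
|b_n(0)| \leq \sum_{1 \leq i_1 < i_2 < \ldots < i_n} s_{i_1}(\l_0) \cdots s_{i_n}(\l_0),
\]
the bound $s_{k+1}(\l_0) \leq \frac{r^k}{\sqrt{1-r^2}}$, equivalently $s_k(\l_0) \leq \frac{r^{k-1}}{\sqrt{1-r^2}}$, converts the right hand side into
\[
\frac{1}{(1-r^2)^{n/2}} \sum_{1 \leq i_1 < \ldots < i_n} r^{(i_1-1) + \ldots + (i_n-1)} = \frac{1}{r^n(1-r^2)^{n/2}} \sum_{1 \leq i_1 < \ldots < i_n} r^{i_1 + \ldots + i_n}.
\]
The prefactor here is exactly $C_0^n$, so the problem reduces to evaluating the combinatorial sum.

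The key step is the substitution $j_k = i_k - k$ for $k=1,\ldots,n$. Since $i_1 < i_2 < \ldots < i_n$ and $i_1 \geq 1$, the new variables satisfy $0 \leq j_1 \leq j_2 \leq \ldots \leq j_n$, and conversely every such non-decreasing tuple arises from a unique strictly increasing tuple of positive integers. Moreover $\sum_k i_k = \sum_k j_k + \frac{n(n+1)}{2}$. Hence
\[
\sum_{1 \leq i_1 < \ldots < i_n} r^{i_1 + \ldots + i_n} = r^{\frac{n(n+1)}{2}} \sum_{0 \leq j_1 \leq \ldots \leq j_n} r^{j_1 + \ldots + j_n}.
\]
The remaining sum is the generating function for partitions with at most $n$ parts and equals $\prod_{k=1}^n \frac{1}{1-r^k}$ (this is a classical identity: sending a non-decreasing tuple $(j_1,\ldots,j_n)$ to the multiplicities of its distinct values gives a bijection with $(m_1,\ldots,m_n) \in \N^n$ counted with weight $r^{\sum k m_k}$, whose generating function is $\prod_{k=1}^n (1-r^k)^{-1}$).

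Putting everything together yields
\[
|b_n(0)| \leq \frac{1}{r^n(1-r^2)^{n/2}} \cdot r^{\frac{n(n+1)}{2}} \cdot \prod_{k=1}^n \frac{1}{1-r^k} = \frac{C_0^n r^{\frac{n(n+1)}{2}}}{\prod_{i=1}^n (1-r^i)},
\]
which is the claimed bound. I do not foresee a genuine obstacle: the only non-routine step is recognising the combinatorial identity, and the rest is bookkeeping once the estimates from Lemma \ref{l0 sing} and inequality (\ref{sing bound}) are in place.
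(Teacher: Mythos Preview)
Your proof is correct and follows essentially the same approach as the paper's: both start from inequality (\ref{sing bound}), insert the approximation-number bound of Lemma \ref{l0 sing} to obtain $|b_n(0)| \leq C_0^n \sum_{i_1<\cdots<i_n} r^{i_1+\cdots+i_n}$, and then evaluate the remaining sum. The paper dispatches the last step in one line (``repeated geometric summation''), whereas you carry out the shift $j_k=i_k-k$ and identify the result with the generating function for partitions into at most $n$ parts; these are two standard routes to the same identity. One small quibble: the parenthetical bijection you describe (``multiplicities of its distinct values'') is not quite the right map---the correct one is either successive differences $d_k=j_k-j_{k-1}$ re-indexed, or conjugation followed by recording part multiplicities---but the identity you invoke is classical and the conclusion is unaffected.
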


\begin{proof}
By (\ref{sing bound}) and Lemma \ref{l0 sing}
\begin{eqnarray*}
|b_n(0)| &\leq& \sum_{i_1< \ldots < i_n} s_{i_1}(\l_0) \ldots s_{i_n}(\l_0) \\
&\leq& C_{0}^n \sum_{i_1< \ldots <i_n} r^{i_1+ \ldots + i_n}.
\end{eqnarray*}
Therefore the result follows by repeated geometric summation.
\end{proof}

Using Lemma \ref{lt sing} we can obtain an upper bound on $|b_n^{\prime}(0)|$. 

\begin{lma}
For each $n \in \N$
$$|b_n^{\prime}(0)| \leq  \frac{neC_{0}^nr^{\frac{n(n+1)}{2}}C_2}{\prod_{i=1}^n (1-r^i)}.$$
\end{lma}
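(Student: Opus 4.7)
The plan is to combine the Cauchy integral representation of $b_n'(0)$ with the uniform bound on $|b_n(t)|$ that follows from Lemma \ref{lt sing}, and then optimise over the radius of the contour.

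First I would start from
$$b_n'(0)=\frac{1}{2\pi i}\int_{S^l}\frac{b_n(t)}{t^2}\,\textup{d}t,$$
which gives the elementary bound $|b_n'(0)|\leq l^{-1}\sup_{|t|=l}|b_n(t)|$ valid for every $l>0$. Since $\l_t$ is trace-class for every $t\in\C$ by Lemma \ref{lt sing}, the inequality (\ref{sing bound}) applies to $b_n(t)$ as well, so for any $|t|=l$,
$$|b_n(t)|\leq \sum_{i_1<\cdots<i_n}s_{i_1}(\l_t)\cdots s_{i_n}(\l_t)\leq \bigl(C_0 e^{C_2 l}\bigr)^n\sum_{i_1<\cdots<i_n}r^{i_1+\cdots+i_n},$$
using Lemma \ref{lt sing} in the form $s_{i+1}(\l_t)\leq C_0 r^i e^{C_2 l}$ with $C_0=1/(r\sqrt{1-r^2})$. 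Repeated geometric summation, exactly as at the end of the proof of Lemma \ref{taylor bound}, then yields
$$|b_n(t)|\leq \frac{C_0^n r^{n(n+1)/2}e^{nC_2 l}}{\prod_{i=1}^n(1-r^i)}.$$

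Substituting into the Cauchy bound gives
$$|b_n'(0)|\leq \frac{C_0^n r^{n(n+1)/2}}{\prod_{i=1}^n(1-r^i)}\cdot\frac{e^{nC_2 l}}{l}$$
for every $l>0$. The final step is to minimise $l\mapsto l^{-1}e^{nC_2 l}$ over $l>0$. A one-line differentiation shows the minimum is attained at $l=1/(nC_2)$, at which point $l^{-1}e^{nC_2 l}=nC_2 e$. Plugging this in produces the claimed inequality
$$|b_n'(0)|\leq \frac{neC_2 C_0^n r^{n(n+1)/2}}{\prod_{i=1}^n(1-r^i)}.$$

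There is no serious obstacle: the only ingredients are the Cauchy integral formula, the approximation-number estimate already proved in Lemma \ref{lt sing}, and a one-variable optimisation. The slightly subtle point is noticing that the factor $e^{C_2 l}$ coming from the dependence of $s_n(\l_t)$ on $t$ must be paid once per approximation number in the product, so the exponent in the $t$-dependent factor is $nC_2 l$, and it is precisely this $n$ that makes the optimal radius $l=1/(nC_2)$ shrink as $n$ grows while keeping the constant $eC_2$ clean.
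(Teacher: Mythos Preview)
Your argument is correct and follows essentially the same route as the paper: bound $|b_n(t)|$ via the approximation-number estimate of Lemma \ref{lt sing} and repeated geometric summation, apply the Cauchy integral formula to convert this into a bound on $|b_n'(0)|$, and optimise the radius to $l=1/(nC_2)$. The only cosmetic difference is the order of presentation; the content is identical.
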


\begin{proof}
By using Lemma \ref{lt sing} we can apply similar arguments to Lemma \ref{taylor bound} to deduce that for any $|t|>0$,
\begin{eqnarray}|b_n(t)| \leq \frac{C_{0}^ne^{C_2n|t|}r^{\frac{n(n+1)}{2}}}{\prod_{i=1}^n (1-r^i)}. \label{bnt}
\end{eqnarray}
Let $l>0$. Since $b_n(t)$ is holomorphic in $t$ everywhere, by the Cauchy integral formula,
$$b_n^{\prime}(0)= \frac{1}{2\pi i} \int_{S^{l}} \frac{b_n(t)}{t^2} \textup{d}t$$
where $S^{l}$ denotes the circle of radius $l>0$ centred at 0. Therefore,
$$|b_n'(0)| \leq \frac{1}{2\pi} \sup_{|t|=l} \left|\frac{b_n(t)}{l^2} \right| \cdot 2\pi l= \frac{\sup_{|t|=l} |b_n(t)|}{l}.$$

Since $e^{C_2}>1$,  $\frac{e^{C_2nl}}{l}$ has a unique minimum for $l \in (0,1)$. By differentiating $\frac{e^{C_2nl}}{l}$ with respect to $l$ and equating to 0 we see that the minimum of this expression is achieved at $l=\frac{1}{nC_2}$. Therefore
$$\sup_{|t|=\frac{1}{nC_2}} |b_n(t)| \leq \frac{eC_{0}^nr^{\frac{n(n+1)}{2}}}{\prod_{i=1}^n (1-r^i)}$$
from which the result follows.
\end{proof}

\subsection{Estimates on $|\sum_{n=1}^{\infty} nb_n(0)|$} \label{peres section}

We begin with an estimate on $|\lambda_2(0)|$. In \cite{peres}, Peres studied the operator 
$$\l f(\overline{\x}) = \sum_{i=1}^k p_if(A_i \cdot \overline{\x})$$
on the Banach space $B$ of $h$-Lipschitz functions $f: \R\P^1_{+} \to \R$. In the following lemma we will use ideas from \cite{peres} (in particular the proof of Theorem 1) to show that the absolute value of any eigenvalue $\lambda \neq 1$ of $\l:B \to B$ is bounded above by $\tau(\A,\p)$.

\begin{lma}Let $\tau(\A,\p)$ be  the weighted Birkhoff coefficient defined in (\ref{wbirkhoff}). Then any eigenvalue $\lambda$ of $\l$ satisfies $|\lambda| \leq \tau(\A,\p)$.
\label{peres} \end{lma}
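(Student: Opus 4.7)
The plan is to exploit Birkhoff's contraction theorem \eqref{birkhoff}, which asserts that each $A_i$ contracts the Hilbert projective metric $h$ on $\R\P_+^1$ by a factor of at most $\tau(A_i)$. The idea is to estimate the Lipschitz constant $|f|_h$ of an eigenfunction $f \in B$ of $\mathcal{L}$ and exploit the convex combination structure of $\mathcal{L}$ to pick up the weighted Birkhoff coefficient $\tau(\A, \p)$ defined in \eqref{wbirkhoff}.

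Concretely, I would take $f \in B$ to be an eigenfunction of $\mathcal{L}$ with eigenvalue $\lambda$ (passing to the complexification of $B$ if $\lambda$ is not real) and fix distinct $\overline{\x}, \overline{\y} \in \R\P_+^1$. Applying the eigenvalue equation $\lambda f = \mathcal{L} f$ at $\overline{\x}$ and $\overline{\y}$, subtracting, and combining the triangle inequality with the definition of $|f|_h$ and Birkhoff's contraction applied to each summand, one obtains
\begin{equation*}
|\lambda| \cdot |f(\overline{\x}) - f(\overline{\y})| \leq \sum_{i=1}^k p_i \cdot |f|_h \cdot h(A_i \cdot \overline{\x}, A_i \cdot \overline{\y}) \leq |f|_h \cdot \tau(\A, \p) \cdot h(\overline{\x}, \overline{\y}).
\end{equation*}
Dividing through by $h(\overline{\x}, \overline{\y})$ and taking the supremum over distinct $\overline{\x}, \overline{\y}$ gives $|\lambda| \cdot |f|_h \leq \tau(\A, \p) \cdot |f|_h$. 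Provided $|f|_h > 0$, dividing by $|f|_h$ yields the claim.

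The only remaining case is $|f|_h = 0$, in which $f$ is constant and $\lambda = 1$. I do not expect this to be a genuine obstacle: the surrounding discussion contrasts the trivial leading eigenvalue $1$ of $\mathcal{L}$ (associated to constant functions) with all other eigenvalues, and the Banach space $B$ in \cite{peres} is naturally normalised so that constants are identified with the zero vector. The only real content of the proof is therefore the single contraction estimate above, with the rest being routine bookkeeping to match the conventions for $B$.
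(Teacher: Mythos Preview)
Your argument is correct and more direct than the paper's. You observe that the single-step estimate
\[
|\l f(\overline{\x})-\l f(\overline{\y})|\le \sum_{i=1}^k p_i|f|_h\,h(A_i\cdot\overline{\x},A_i\cdot\overline{\y})\le \tau(\A,\p)\,|f|_h\,h(\overline{\x},\overline{\y})
\]
already gives $|\l f|_h\le\tau(\A,\p)|f|_h$, so any eigenfunction with $|f|_h>0$ immediately yields $|\lambda|\le\tau(\A,\p)$, while $|f|_h=0$ forces $f$ constant and $\lambda=1$. The paper instead iterates: it first bounds the oscillation $|\l^n f(\overline{\x})-\l^n f(\overline{\y})|\le|f|_B\,\tau(\A,\p)^{n-1}\eta$ (absorbing one step into the finite Hilbert diameter $\eta$ of the image rather than a contraction factor), then telescopes to show $\l^n f$ converges in sup norm to a constant $c_f$ at rate $\tau(\A,\p)^n$, and finally applies this to an eigenfunction with $\lambda\neq 1$ to conclude $c_g=0$ and $|\lambda|\le\tau(\A,\p)$. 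The paper's route thus establishes a stronger auxiliary fact (sup-norm convergence of iterates to a constant), but for the eigenvalue bound itself your Lipschitz-seminorm argument is cleaner and entirely sufficient.

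One small caveat on the bookkeeping: your guess that constants are quotiented out of $B$ is unnecessary and probably not how the paper reads it. The paper simply means $\lambda\neq 1$ (as stated in the sentence immediately preceding the lemma and in its own proof), and your argument already handles this case correctly: the only eigenfunctions with vanishing Lipschitz seminorm are constants, which have eigenvalue $1$.
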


\begin{proof}
Let $f \in B$ and $|f|_{B}$ denote the Lipschitz constant of $f$ (with respect to the Hilbert metric), that is, the minimum constant for which
$$|f(\overline{\x})- f(\overline{\y})| \leq |f|_B h(\overline{\x},\overline{\y}).$$
Since all of the matrices $A_i$ map the positive cone strictly inside itself, there exists $\eta< \infty$ such that 
$$\sup_{i,j \in \I}\sup_{ \overline{\x},\overline{\y} \in \R\P^1_{+}} h(\overline{A_i\x},\overline{A_{j}\y}) \leq \eta.$$
Let $\tau(A)$ denote the Birkhoff coefficient of the positive matrix $A$ as defined in (\ref{birkhoff}). Then for $\overline{\x},\overline{\y} \in \R\P^1_{+}$,  
\begin{eqnarray}
|(\l^nf)(\overline{\x})-(\l^nf)(\overline{\y})| &\leq& \sum_{i_1, \ldots, i_n \in \I} p_{i_1} \cdots p_{i_n} |f(\overline{A_{i_1 \ldots i_n}\x})-f(\overline{A_{i_1 \ldots i_n}\y})| \nonumber\\
&\leq& |f|_B\sum_{i_1, \ldots, i_n \in \I} p_{i_1} \cdots p_{i_n} \tau(A_{i_1}) \cdots \tau(A_{i_{n-1}})h(\overline{A_{i_n}\x},\overline{A_{i_n}\y}) \nonumber\\
&\leq& |f|_B \tau(\A,\p)^{n-1}\eta. \label{p1}
\end{eqnarray}
Next, observe that
\begin{eqnarray}
\l^{n+1}f(\overline{\x})-\l^nf(\overline{\x})= \sum_{j \in \I} p_j(\l^nf(\overline{A_j\x})-\l^nf(\overline{\x})) . \label{p2}
\end{eqnarray}
(\ref{p1}) and (\ref{p2}) imply that
\begin{eqnarray}
|\l^{n+1}f(\overline{\x})-\l^nf(\overline{\x})| \leq |f|_B \tau(\A,\p)^{n-1}\eta.\label{p3} \end{eqnarray}
Therefore $\l^nf(\overline{\x})$ is a Cauchy sequence and is convergent. Moreoever, since $|\tau(\A,\p)|<1$, by (\ref{p1}) the limits $\lim_{n \to \infty}\l^nf(\overline{\x})=\lim_{n \to \infty}\l^nf(\overline{\y})$ coincide for any $\overline{\x},\overline{\y} \in \R\P^1_{+}$ and therefore $\l^nf(\overline{\x})$ converges to a constant $c_f$ for all $\overline{\x} \in \R\P^1_{+}$. By (\ref{p3}),
\begin{eqnarray}
\norm{\l^{n+1}f-c_f}_{\infty} \leq |f|_B\eta \sum_{k=n}^{\infty} \tau(\A,\p)^k=C|f|_B \tau(\A,\p)^n \label{p4}
\end{eqnarray}
for some constant $C$ which is independent of $n$. Let $g \in B$ and $\lambda \neq 1$ such that $\l g=\lambda g$. Applying (\ref{p4}) to $g$ implies that
$$\norm{\lambda^{n+1}g-c_g}_{\infty} \leq C|g|_B\tau(\A,\p)^n$$
which means that $c_g=0$ and therefore
$$\norm{\lambda^{n+1}g}_{\infty} \leq C|g|_B\tau(\A,\p)^n.$$
It follows that $|\lambda| \leq \tau(\A,\p).$
\end{proof}

We now show that this implies that $\tau(\A,\p)$ is an upper bound on $|\lambda_2(0)|$.

\begin{lma}
 Let $\tau(\A,\p)$ be the weighted Birkhoff coefficient defined in (\ref{wbirkhoff}). Then $|\lambda_2(0)| \leq \tau(\A,\p)$. \label{second}
\end{lma}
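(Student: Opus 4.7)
The plan is to transfer the spectral bound in Lemma \ref{peres} from the operator $\l$ on the space $B$ of Hilbert-Lipschitz functions to the transfer operator $\l_0$ on $H^2(D)$, via the identification of $\R\P^1_+$ with $(0,1) \subset D$ given by $\overline{(x,1-x)} \leftrightarrow x$. I will show that every eigenvalue $\lambda \neq 1$ of $\l_0$ satisfies $|\lambda| \le \tau(\A, \p)$; this specialises to the desired bound because Proposition \ref{simple} guarantees $\lambda_2(0) \neq 1$. Note that the proof of Lemma \ref{peres} goes through verbatim for complex-valued Hilbert-Lipschitz functions, so I may work over $\C$.

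Let $f \in H^2(D)$ satisfy $\l_0 f = \lambda f$ with $\lambda \neq 0, 1$ (the case $\lambda = 0$ is trivial). First I would observe that $f$ is bounded on $D$: since each $\overline{\phi_{A_i}(D)}$ is a compact subset of $D$, the function $\l_0 f = \sum_{i=1}^k p_i f\circ\phi_{A_i}$ is bounded on $D$, hence so is $f = \lambda^{-1} \l_0 f$. Write $M := \sup_{z \in D}|f(z)| < \infty$. Next I would define $g \colon \R\P^1_+ \to \C$ by $g(\overline{(x,1-x)}) := f(x)$ for $x \in (0,1)$. By identity (\ref{identify}), $A_i \cdot \overline{(x,1-x)} = \overline{(\phi_{A_i}(x), 1-\phi_{A_i}(x))}$, so $\l g(\overline{(x,1-x)}) = \sum_{i=1}^k p_i f(\phi_{A_i}(x)) = (\l_0 f)(x) = \lambda g(\overline{(x,1-x)})$, i.e., $\l g = \lambda g$.

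To apply Lemma \ref{peres} it remains to check that $g$ is Hilbert-Lipschitz. Parametrising $\R\P^1_+$ by $s = \log(x/(1-x))$ turns the Hilbert metric into $h(\overline{\x},\overline{\y}) = |s_1 - s_2|$, and the chain rule gives $dg/ds = f'(x) \cdot x(1-x)$. Since $D$ is the disc of radius $\tfrac{1}{2}$ centred at $\tfrac{1}{2}$, the open disc of radius $\min(x, 1-x)$ about any $x \in (0,1)$ lies inside $D$, and Cauchy's estimate yields $|f'(x)| \le M/\min(x,1-x)$. Hence $|dg/ds| \le M \max(x,1-x) \le M$, so $|g|_B \le M < \infty$, and Lemma \ref{peres} delivers $|\lambda| \le \tau(\A,\p)$.

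The main obstacle is this final Hilbert-Lipschitz estimate. The eigenfunction $f$ can have unbounded derivative as $x \to 0^+$ or $x \to 1^-$, since $0$ and $1$ lie on $\partial D$ and are pushed to $\mp\infty$ under $s = \log(x/(1-x))$; what one needs is that any such blow-up is tamed by the factor $x(1-x)$ arising from the change of variable. Cauchy's estimate on discs of radius $\min(x,1-x)$ delivers exactly this cancellation, and it is here that the holomorphicity of $f$ (rather than mere continuity) is essential.
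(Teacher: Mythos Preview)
Your proof is correct and follows the same overall architecture as the paper's: restrict an $H^2(D)$-eigenfunction $f$ of $\l_0$ to $(0,1)$, identify it with a function $g$ on $\R\P^1_+$, verify that $g$ is Hilbert-Lipschitz, and invoke Lemma~\ref{peres}. The one point of divergence is in how the Hilbert-Lipschitz bound is obtained. The paper differentiates the eigenvalue relation to get $f'(z)=\lambda^{-1}\sum_i p_i\,\phi_{A_i}'(z)\,f'(\phi_{A_i}(z))$, observes that the right-hand side involves $f'$ only on the compact set $\bigcup_i\overline{\phi_{A_i}(D)}$, and concludes that $f'$ is bounded on all of $D$; it then uses the elementary inequality $|x-y|\le h(\overline{\x},\overline{\y})$ to pass from Euclidean to Hilbert Lipschitz. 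You instead use the eigenvalue relation only to show that $f$ itself is bounded, and then apply Cauchy's estimate on the shrinking discs $B(x,\min(x,1-x))\subset D$ to control $f'(x)\,x(1-x)$ directly. Your route is a touch more elementary and sidesteps the need to bound $f'$ globally, while the paper's route yields the stronger intermediate conclusion that $f'\in L^\infty(D)$; either way the transfer to Lemma~\ref{peres} goes through.
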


\begin{proof}
Let $f \in H^2(D)$ such that $\l_0f= \lambda f$ for some $|\lambda|<1$. In particular $f$ is not a constant function.  We begin by showing that $f$ is Lipschitz with respect to the standard Euclidean metric. Since
$$f(z)= \lambda^{-1} \sum_{i=1}^k p_i f \circ \phi_{A_i}(z)$$
we can differentiate to obtain
$$f'(z)= \lambda^{-1} \sum_{i=1}^k p_i f'(\phi_{A_i}(z)) \phi_{A_i}'(z).$$
Since 
$$\phi_{A_i}'(z)=\frac{a_i-b_i-z(a_i+c_i-b_i-d_i)}{(a_i+c_i-b_i-d_i)z+b_i+d_i},$$
it is easy to see by (\ref{bounds}) that $\phi_{A_i}'$ is bounded on $D$ for all $i \in \{1, \ldots, k\}$. Moreoever since $|f'(\phi_{A_i}(z))| \leq \sup_{y \in \overline{\phi_{A_i}(D)}}|f'(y)|$ it follows that the derivative of $f$ is bounded on $D$, and so $f$ is Lipschitz. Let $\overline{f}: (0,1) \to \R$ denote the restriction of $f$ to the real line. Then $\overline{f}$ is also Lipschitz with respect to the Euclidean metric and we let $|\overline{f}|_{\text{Lip}}$ denote the corresponding Lipschitz constant of $\overline{f}$.

Let $g: \R \P^1_{+} \to \R$ be defined by $g(\overline{\x})=\overline{f}(x)$ where $x$ is defined uniquely as $x \in (0,1)$ for which $\overline{\x}=(x, 1-x)$. Since $f$ is not a constant function this implies $\overline{f} $ is not constant which in turn implies $g$ is not constant. Observe that $|x-y| < \log \left(\frac{x}{y}\right)<  \log \frac{x(1-y)}{y(1-x)} $ for $0<y<x<1$ and so by symmetry
\begin{eqnarray*}
|x-y| \leq  \left| \log \frac{x(1-y)}{y(1-x)} \right| 
\end{eqnarray*}
for any $x, y \in (0,1)$, with equality if and only if $x=y$. Therefore since $\overline{f}$ is Lipschitz with respect to the Euclidean metric, $g$ is Lipschitz with respect to the Hilbert metric $h$:
 $$|g(\overline{\x})-g(\overline{\y})|= |\overline{f}(x)-\overline{f}(y)| \leq |\overline{f}|_{\text{Lip}} |x-y| \leq  |\overline{f}|_{\text{Lip}} \left| \log \frac{x(1-y)}{y(1-x)}\right|= |\overline{f}|_{\text{Lip}} h(\overline{\x}, \overline{\y}).$$
  Therefore $g$ is an $h$-Lipschitz eigenfunction for $\l$ since
\begin{eqnarray*}
\l g(\overline{\x})&=& \sum_{i=1}^k p_i g(A_i \cdot \overline{\x}) \\
&=& \sum_{i=1}^k p_i g\left(\frac{(a_i-b_i)x+b_i}{(a_i+c_i-b_i-d_i)x+b_i+d_i}, \frac{(c_i-d_i)x+d_i}{(a_i+c_i-b_i-d_i)x+b_i+d_i}\right) \\
&=& \sum_{i=1}^k p_i \overline{f}\left(\frac{(a_i-b_i)x+b_i}{(a_i+c_i-b_i-d_i)x+b_i+d_i}\right) \\
&=&\l_0\overline{f}(x)=\lambda \overline{f}(x)= \lambda g(\overline{\x}).
\end{eqnarray*}
Since $g$ is not a constant it follows that $|\lambda| \leq |\lambda_2(\l)| \leq \tau(\A,\p)$ by Lemma \ref{peres}.
\end{proof}

We are now in a position to obtain a lower bound on $|\sum_{n=1}^{\infty} nb_n(0)|$.

\begin{lma}
There exists $M \in \N$ such that
$$\left|\sum_{n=0}^{\infty} nb_n(0)\right| \geq (1-|\lambda_2(0)|)^{M-2}\prod_{n=M}^{\infty} |1-C_0 r^{\frac{n+1}{2}}|>0.$$
\label{lb}
\end{lma}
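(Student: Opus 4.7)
The starting point is the identity
\begin{equation*}
\left|\sum_{n=1}^{\infty} nb_n(0)\right|=\prod_{n=2}^{\infty}|1-\lambda_n(0)|
\end{equation*}
already recorded in (\ref{sum bound}). My plan is to split this infinite product at the threshold $M$ specified in the introduction (i.e.\ the smallest integer $M\ge 2$ with $C_0 r^{(M+1)/2}<1$), and to bound the two resulting pieces using two different eigenvalue estimates: a crude one for the first $M-2$ factors and the approximation-number bound from Lemma \ref{l0 sing} for the tail.

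For the first factors $n=2,\dots,M-1$, since the eigenvalues are listed in decreasing order of modulus I have $|\lambda_n(0)|\le |\lambda_2(0)|<1$, where $|\lambda_2(0)|<1$ comes from Lemma \ref{second} together with the fact that $\tau(\A,\p)<1$. Hence $|1-\lambda_n(0)|\ge 1-|\lambda_n(0)|\ge 1-|\lambda_2(0)|$, giving
\begin{equation*}
\prod_{n=2}^{M-1} |1-\lambda_n(0)| \;\ge\; (1-|\lambda_2(0)|)^{M-2}.
\end{equation*}
This is the source of the factor $(1-|\lambda_2(0)|)^{M-2}$ in the statement.

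For the tail $n\ge M$, I need an explicit decay rate on the individual eigenvalues. Combining Lemma \ref{l0 sing} (which gives $s_i(\l_0)\le C_0 r^i$ for $i\ge 1$) with the Weyl-type product inequality (\ref{product ineq}) yields
\begin{equation*}
|\lambda_n(\l_0)|^n \le \prod_{i=1}^n |\lambda_i(\l_0)| \le \prod_{i=1}^n s_i(\l_0) \le C_0^n r^{n(n+1)/2},
\end{equation*}
so after taking $n$-th roots $|\lambda_n(\l_0)|\le C_0 r^{(n+1)/2}$. By the choice of $M$, for $n\ge M$ this quantity is strictly less than $1$, and therefore $|1-\lambda_n(0)|\ge 1 - C_0 r^{(n+1)/2} >0$. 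Multiplying across $n\ge M$ gives
\begin{equation*}
\prod_{n=M}^{\infty} |1-\lambda_n(0)| \;\ge\; \prod_{n=M}^{\infty}\bigl(1-C_0 r^{(n+1)/2}\bigr),
\end{equation*}
which is the second factor in the claimed lower bound.

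Combining the two estimates yields the desired inequality, and strict positivity follows because the tail product $\prod_{n=M}^\infty(1-C_0 r^{(n+1)/2})$ converges to a positive limit (its logarithm is absolutely summable since $\sum_n C_0 r^{(n+1)/2}<\infty$) and $1-|\lambda_2(0)|>0$ by Lemma \ref{second}. The only genuinely non-routine step is the passage $s_i\le C_0 r^i \Rightarrow |\lambda_n|\le C_0 r^{(n+1)/2}$; the square root in the exponent is the price paid for having to go through the product inequality rather than having a pointwise bound $|\lambda_n|\le s_n$, and it is why one cannot simply compare the tail term by term to $1-s_n(\l_0)$.
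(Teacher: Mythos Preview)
Your proof is correct and follows essentially the same route as the paper: the identical splitting of the product $\prod_{n\ge 2}|1-\lambda_n(0)|$ at $M$, the same crude bound $|1-\lambda_n(0)|\ge 1-|\lambda_2(0)|$ for $2\le n\le M-1$, and the same passage from $s_i(\l_0)\le C_0 r^i$ to $|\lambda_n(0)|\le C_0 r^{(n+1)/2}$ via the Weyl product inequality (\ref{product ineq}) for the tail. Your additional remarks justifying strict positivity of the tail product and explaining the origin of the square-root exponent are welcome clarifications that the paper leaves implicit.
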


\begin{proof}
Recall that
$$\sum_{n=1}^{\infty} nb_n(0)= \frac{\textup{d}}{\textup{d}z} \sum_{n=0}^{\infty} b_n(0)z^n \biggr|_{z=0} = \frac{\textup{d}}{\textup{d}z} \prod_{n=1}^{\infty}(1-z\lambda_k(0)) \biggr|_{z=0}.$$
By applying the chain rule and recalling that $\lambda_1(0)=1$ we see that
$$\left|\sum_{n=1}^{\infty} nb_n(0)\right|=\prod_{n=2}^{\infty}|1-\lambda_n(0)|.$$
Let $M \geq 2$ be sufficiently large that $C_0 r^{\frac{M+1}{2}}<1$. Then for each $n \geq M$
$$|\lambda_n(0)|^n \leq \prod_{k=1}^n |\lambda_k(0) |\leq \prod_{k=1}^n s_k(\l_0) \leq C_0^n \prod_{k=1}^n r^k=C_0^n r^{\frac{1}{2}n(n+1)}.$$
In particular, $|\lambda_n(0)| \leq C_0 r^{\frac{n+1}{2}} <1$ and so
$$\prod_{n=M}^{\infty}|1-\lambda_n(0)| \geq \prod_{n=M}^{\infty} |1-C_0 r^{\frac{n+1}{2}}|.$$
On the other hand, since $\lambda_n(0)$ are decreasing in absolute value,
$$\prod_{n=2}^{M-1} |1-\lambda_n(0)| \geq (1-|\lambda_2(0)|)^{M-2}.$$

\end{proof}

\subsection{Summary} \label{summary}

Recall that $r:= \max_{i \in \I} \frac{R_i-1}{R_i+1}$ where $R_i$ was the smallest constant for which $\frac{1}{R_i} \leq \frac{a_i}{c_i}, \frac{b_i}{d_i} \leq R_i$. Also denote $s:= \tau(\A,\p)$, the weighted Birkhoff contraction coefficient. Recall that $C_1:=\min_{i \in \I} \{a_i+c_i, b_i+d_i, \frac{1}{a_i+c_i}, \frac{1}{b_i+d_i}\}>1$ by Assumption \ref{ass}. Also recall that $\theta= \max_{1 \leq i \leq k} \{\arcsin(\frac{|a_i+c_i-b_i-d_i|}{a_i+b_i+c_i+d_i})\}$. Let $C_2= \sqrt{(\log C_1)^2+\theta^2}$, $C_0= \frac{1}{r\sqrt{1-r^2}}$ and $M\geq 2$ be large enough that $C_0r^{\frac{M+1}{2}}<1$.

Denote $\alpha= \sum_{n=0}^{\infty} nb_n(0)$, $\beta=\sum_{n=0}^{\infty} b_n'(0)$ and for each $N \in \N$ denote $\alpha_N= \sum_{n=N+1}^{\infty} nb_n(0)$ and $\beta_N=\sum_{n=N+1}^{\infty} b_n'(0)$. By (\ref{diff}), (\ref{denom}) and (\ref{num})
\begin{eqnarray*}
|\Lambda-\Lambda_N|&=& \left| \frac{(\beta-\beta_N) \alpha-\beta(\alpha-\alpha_N)}{(\alpha-\alpha_N)\alpha}\right| \\
&\leq& \frac{|\alpha||\beta_N|+ |\alpha_N||\beta|}{|\alpha||\alpha-\alpha_N|}
\end{eqnarray*}
where
\begin{eqnarray*}
\alpha^{-}=|1-s|^{M-2}\prod_{n=M}^{\infty}(1-C_0r^{\frac{n+1}{2}}) \leq |\alpha| &\leq& \sum_{n=1}^{\infty} n\frac{C_0^n r^{\frac{n(n+1)}{2}}}{\prod_{i=1}^n (1-r^i)} =\alpha^{+}\\
|\alpha_N| &\leq& \sum_{n=N+1}^{\infty} n\frac{C_0^n r^{\frac{n(n+1)}{2}}}{\prod_{i=1}^n (1-r^i)} =\alpha_N^{+}\\
|\beta| &\leq& \sum_{n=0}^{\infty} \frac{neC_2C_0^n r^{\frac{n(n+1)}{2}}}{\prod_{i=1}^n (1-r^i)}=\beta^{+} \\
|\beta_N| &\leq& \sum_{n=N+1}^{\infty} \frac{neC_2C_0^n r^{\frac{n(n+1)}{2}}}{\prod_{i=1}^n (1-r^i)}=\beta_N^{+}. 
\end{eqnarray*}
Therefore, as long as $N$ is sufficiently large that
$$\alpha_N^{+}=\sum_{n=N+1}^{\infty} n\frac{C_0^n r^{\frac{n(n+1)}{2}}}{\prod_{i=1}^n (1-r^i)} < |1-s|^{M-2}\prod_{n=M}^{\infty}(1-C_0r^{\frac{n+1}{2}})=\alpha^{-} $$
we can use the above estimates to bound 
\begin{eqnarray*}
|\Lambda-\Lambda_N| &\leq& \frac{\beta_N^{+}}{\alpha^{-}-\alpha_N^{+}} + \frac{\alpha_N^{+}\beta^{+}}{\alpha^{-}(\alpha^{-}-\alpha_N^{+})}.\end{eqnarray*}

In particular, note that the error satisfies the bound provided in Theorem \ref{main} for some constants $C$ and $\gamma$. Notice that due to the modification to the definition of $\alpha^-$, this bound in slightly better than the one given in the introduction, as long as $\tau(\A,\p) \neq r$ and $M>2$.

\section{Examples} \label{eg}

In this section we illustrate the algorithm in action with two examples which demonstrate how the performance of the algorithm and the upper bound on the error $|\Lambda-\Lambda_N|$ depend on the pair $(\A,\p)$. We also describe an approach which can be used to optimise the estimate on the error.

\begin{exam} \label{eg1}
Let $\p=(\frac{1}{2}, \frac{1}{2})$ and $\A=\{A_1,A_2\}=\left\{ \begin{pmatrix} 2&1 \\1&1 \end{pmatrix}, \begin{pmatrix} 3&1 \\2&1 \end{pmatrix}\right\}$. The matrices in $\A$ highly contract the positive quadrant (as demonstrated by the fact that $R_1=2$ and $R_2=\frac{3}{2}$ are both close to 1) which yields a small value for $r=\frac{1}{3}$. 

The table below demonstrates the output of the first ten iterates of the algorithm, given to 40 decimal places, and the corresponding upper bound on the error $|\Lambda-\Lambda_N|$. After 9 iterates of the algorithm the approximation appears to be accurate to around 39 decimal places. However, with our current bound on $|\Lambda-\Lambda_N|$, we can only rigorously justify 18 decimal places, and therefore we will perform a change of basis in order to improve on the bound (see below).

\vspace{2mm}
\begin{center}
\begin{tabular}{ |c|c|c| } 
 \hline
 $N$ & $\Lambda_N$ & Upper bound on $|\Lambda-\Lambda_N|$ \\ 
 \hline
 $1$ & $1.1323207013592984485818131912319549169181$ & $109.679$ \\ 
 $2$ & $1.1438057609617536317295772822737684626387$ & $253.078 $ \\ 
 $3$ & $1.1433094613369731162622336336724095207554$ & $9.05634 $ \\ 
 $4$ & $1.1433110357039283332222408377554188622939$ & $0.135028$ \\ 
 $5$ & $1.1433110351029192261291838354049305776777$ & $0.000704763 $ \\ 
 $6$ & $1.1433110351029492460476387448104729203942$ & $ 1.19547\cdot10^{-6}$ \\ 
 $7$ & $1.1433110351029492458432516598590310787720$ & $6.62462\cdot10^{-10} $ \\ 
 $8$ & $1.1433110351029492458432518536556142146672$ & $ 1.20473\cdot10^{-13}$ \\ 
 $9$ & $1.1433110351029492458432518536555882994021$ & $7.21309\cdot10^{-18} $ \\ 
 $10$ & $1.1433110351029492458432518536555882994025$ & $1.4252\cdot10^{-22} $ \\
 \hline

\end{tabular}
\end{center}

\end{exam}

In the above example, the estimate on $|\Lambda-\Lambda_N|$ was not optimal, essentially due to the fact that each matrix mapped the positive directions into a cone which was not symmetric in the direction given by the representative vector $(1,1)$. In such a situation it is possible to perform a change of basis and  optimise the value of $r$, hence optimising the estimate on the error $|\Lambda-\Lambda_N|$. 

Given $\A=\{A_i: 1 \leq i \leq k\}$ and $\lambda\neq 0$, let $\A_{\lambda}$ denote the set
$$\A_{\lambda}=\left\{ \begin{pmatrix} a_i& \lambda^2 b_i \\ \frac{c_i}{\lambda^2}& d_i \end{pmatrix} : 1 \leq i \leq k \right\},$$
noticing that 
\begin{eqnarray}
\begin{pmatrix} a_i& \lambda^2 b_i \\ \frac{c_i}{\lambda^2}& d_i \end{pmatrix}= \begin{pmatrix} \lambda & 0\\ 0& \frac{1}{\lambda} \end{pmatrix} \begin{pmatrix} a_i&  b_i \\ c_i& d_i \end{pmatrix} \begin{pmatrix} \frac{1}{\lambda} & 0  \\ 0& \lambda \end{pmatrix}. \label{conj}
\end{eqnarray}
Fix a probability vector $\p$. From (\ref{conj}) it is clear that $\Lambda(\A,\p)=\Lambda(\A_{\lambda}, \p)$ for any $\lambda \neq 0$; moreover the algorithm remains the same since the approximations $\Lambda_N$ only depend on the eigenvalues of finite matrix products, which are invariant under conjugation.  Therefore, the goal is to choose a value of $\lambda$ such that $r_{\lambda}=r_{\A_{\lambda}}$ is as small as possible. 

Let 
$$R_{\lambda}:= \max_{i \in \I}\left\{ \frac{a_i \lambda^2}{c_i}, \frac{c_i}{a_i \lambda^2}, \frac{\lambda^2b_i}{d_i}, \frac{d_i}{\lambda^2b_i}\right\}.$$
Fix $\lambda_0 \neq 0$ to be the value that minimises $R_{\lambda}$. If $\A_{\lambda_0}$ consists only of column stochastic matrices, we can conclude that $\Lambda=0$. Otherwise, we have a new value for the constant (\ref{c1}) given by
$$C_{1,\lambda_0}=\max_{1 \leq i \leq k} \left\{a_i+\frac{c_i}{\lambda_0^2}, \lambda_0^2b_i+d_i, \frac{\lambda_0^2}{\lambda_0^2a_i+c_i}, \frac{1}{\lambda_0^2b_i+d_i}\right\} >1$$
and we can obtain a new estimate on $|\Lambda-\Lambda_N|$ by entering the values $r_{\lambda_0}$ and $C_{1, \lambda_0}$ into the expressions in section \ref{summary}.

By applying this approach to example \ref{eg1}, the problem boils down to minimising the expression 
$$R_{\lambda}=\max\{2\lambda^2, \frac{1}{\lambda^2}\}$$
which is minimised when $\lambda_0 =2^{-\frac{1}{4}}$ producing $r_{\lambda_0}=\frac{\sqrt{2}-1}{\sqrt{2}+1}\approx 0.17$. We obtain $C_{1, \lambda_0} =3+2 \sqrt{2}$.  

Below we reproduce the table from example \ref{eg1}, this time with the improved upper bounds on the errors $|\Lambda_N-\Lambda|$. Observe that with this improved bound, we can now rigorously justify 32 decimal places of $\Lambda_9$.

\begin{center}
\begin{tabular}{ |c|c|c| } 
 \hline
 $N$ & $\Lambda_N$ & Improved upper bound on $|\Lambda-\Lambda_N|$ \\ 
 \hline
 $1$ & $1.1323207013592984485818131912319549169181$ & $ 305.614$ \\ 
 $2$ & $1.1438057609617536317295772822737684626387$ & $1.25027$ \\ 
 $3$ & $1.1433094613369731162622336336724095207554$ & $ 0.00810529$ \\ 
 $4$ & $1.1433110357039283332222408377554188622939$ & $ 8.902\cdot 10^{-6}$ \\ 
 $5$ & $1.1433110351029192261291838354049305776777$ & $ 1.61193\cdot 10^{-9} $ \\ 
 $6$ & $1.1433110351029492460476387448104729203942$ & $4.86927\cdot 10^{-14}$ \\ 
 $7$ & $1.1433110351029492458432516598590310787720$ & $2.47219\cdot 10^{-19} $ \\ 
 $8$ & $1.1433110351029492458432518536556142146672$ & $2.11988\cdot10^{-25} $ \\ 
 $9$ & $1.1433110351029492458432518536555882994021$ & $ 3.08032\cdot10^{-32}$ \\ 
 $10$ & $1.1433110351029492458432518536555882994025$ & $ 7.6026\cdot 10^{-40} $ \\
 \hline

\end{tabular}
\end{center}
\vspace{2mm}

\begin{exam}
Let $\p=(\frac{1}{2}, \frac{1}{2})$ and $\A=\{A_1,A_2\}=\left\{ \begin{pmatrix} 3&1 \\1&3 \end{pmatrix}, \begin{pmatrix} 5&2 \\2&5\end{pmatrix}\right\}$. This time $\A$ consists of matrices which only mildly contract the positive quadrant, which the higher value of $r=\frac{1}{2}$ reflects. Note also that $\tau(\A,\p)=\frac{13}{28}$. Since $\Lambda_N$ takes longer to converge, we also include approximations $\Lambda_{11}$ to $\Lambda_{15}$.

\vspace{2mm}
\begin{center}
\begin{tabular}{ |c|c|c| } 
 \hline
 $N$ & $\Lambda_N$ & Upper bound on $|\Lambda-\Lambda_N|$ \\ 
 \hline
 $1$ & $1.6474483954897545390942122098288722131112$ & $ 2746.58$ \\ 
 $2$ & $1.6029620162255035051574698919128471509232 $ & $2758.98 $ \\ 
 $3$ & $1.6559697370513636166814363906952389005330$ & $ 2850.74$ \\ 
 $4$ & $1.6671719385685105789810370493542955219116$ & $ 4904.89$ \\ 
 $5$ & $1.6660672588037147468953253378545484207346 $ & $295.52 $ \\ 
 $6$ & $1.6661027783284791857844224273852985042619$ & $ 5.60925$ \\ 
 $7$ & $1.6661022515053788423273670386553440044077 $ & $ 0.0576355$ \\ 
 $8$ & $ 1.6661022550990213376439090467415586661772$ & $ 0.000292276$ \\ 
 $9$ & $1.6661022550875848155166723158881986394236 $ & $7.3219 \cdot 10^{-7} $ \\ 
 $10$ & $1.6661022550876019742044196964403971038778 $ & $ 9.08085 \cdot 10^{-10}$ \\
 $11$& $1.6661022550876019619657305469157054370027$ & $5.58504 \cdot 10^{-13} $ \\
 $12$& $1.6661022550876019619699091800215128050474$ & $1.70563 \cdot 10^{-16} $ \\
 $13$& $1.6661022550876019619699084931251373353824$ & $ 2.58906 \cdot 10^{-20}$ \\
 $14$& $1.6661022550876019619699084931797685439844$ & $1.95502 \cdot 10^{-24} $ \\
 $15$& $1.6661022550876019619699084931797664328543$ & $7.34848 \cdot 10^{-29} $ \\
 \hline
\end{tabular}
\end{center}
\vspace{2mm}

The approximations $\Lambda_N$ appear to take a lot longer to converge than in example \ref{eg1}, for instance $\Lambda_{9}$ seems to be accurate only to about 12 decimal places (compared to around 39 decimal places in the previous example).

We also observe that in this example we experience a longer initial lag before we can rigorously justify the accuracy of our approximations; indeed it is not until the 7th step that the upper bound on the error drops below 1. However, the upper bound eventually `catches up' with the apparent convergence of the approximations, for example $\Lambda_{14}$ appears to be accurate to around 32 decimal places and we can rigorously justify around 24 of these decimal places. Note that in this example since each of the matrices map the positive directions into a cone which is already symmetric in the direction given by the representative vector $(1,1)$, the approach used earlier to optimise the bound on the error cannot be implemented.
 \label{eg2}
\end{exam}

\section{Higher dimensions}\label{further}

It is natural to ask whether effective estimates for Lyapunov exponents along the lines of Theorem \ref{main} may be obtained for positive matrices in higher dimensions. In this article we expressed the top Lyapunov exponent of a collection of $2\times 2$ matrices $A_1,\ldots,A_k$ and probabilty vector $p_1,\ldots,p_k$ by considering the matrices' projective action on the real interval
\[\left\{\begin{pmatrix}x\\1-x\end{pmatrix} \colon x \in (0,1)\right\}\]
and extending it to a holomorphic action on the set of complex vectors
\[\left\{\begin{pmatrix}z\\1-z\end{pmatrix} \colon \left|z-\frac{1}{2}\right|<\frac{1}{2}\right\}\]
which for simplicity we identified with the disc $D:=\{z \in \mathbb{C} \colon |z-1/2|<1/2\}$. This allowed us to realise the Lyapunov exponent via the spectra of a family of trace-class operators on $H^2(D)$. In particular our results depended crucially on our ability to effectively estimate the singular values, or approximation numbers, of these operators, and also on our ability to estimate the gap between the largest and second-largest eigenvalues of one of these operators.

In order to obtain a version of Theorem \ref{main} for $d\times d$ matrices it is natural to proceed by analogy and consider the projective action on the open real simplex
\[\left\{\begin{pmatrix}x_1\\ \vdots \\x_d\end{pmatrix} \colon \sum_{i=1}^d x_i=1\text{ and }x_i>0\text{ for all }i=1,\ldots,d \right\}.\]
This now presents the problem of finding a complex extension of this simplex on which the real matrices induce a well-defined holomorphic action. One may show that such an extension  is provided by the corresponding projective slice in the natural complex extension of the positive cone:
\[\Omega:=\left\{\begin{pmatrix}z_1 \\ \vdots \\z_d\end{pmatrix} \colon \sum_{i=1}^dz_i=1\text{ and } \Re(z_i\overline{z_j})>0\text{ for all }1 \leq i,j \leq d\right\}.\]
We could then attempt to proceed by studying transfer operators on an appropriate space of holomorphic functions on $\Omega$. Since the structure and even definition of the space $H^2(\Omega)$ are sensitive to the shape of the boundary $\partial\Omega$, the Bergman space $A^2(\Omega)$ of holomorphic functions which are square-integrable over $\Omega$ may be more amenable. This would allow us to take advantage of existing estimates for the singular values of composition operators on Bergman spaces such as those in \cite{bj2}. 

The problem with this approach is that to the best of the authors' knowledge no fully effective estimates on the decay rates of these singular values exist in the literature. One may show that every positive $d\times d$ matrix induces a holomorphic transformation of $\Omega$ which maps $\Omega$ to a precompact subset of $\Omega$, which in particular does not approach to within some effectively-estimable distance $\varepsilon>0$ of the boundary of $\Omega$. Given a tuple of matrices $A_1,\ldots,A_k$ let us call the union of these images of $\Omega$ under the extensions of the projective actions of the matrices $A_i$ the \emph{image set} associated to those matrices. The estimates presented in \cite{bj2} allow the singular values of the associated composition operator on $A^2(\Omega)$ to be bounded in terms of $\varepsilon$ and in terms of a constant which relates to the minimum possible cardinality of a covering of the image set by ``strictly circled'' sets which are themselves subsets of $\Omega$. In order to effectively estimate the singular values (and hence the error in approximating the top Lyapunov exponent) by this method it would be necessary to understand the shape of $\Omega$ and the image set well enough to be able to estimate this covering constant effectively. Of course, in the case $d=2$ the set $\Omega$ is a Euclidean disc and therefore the image set can be contained in a single disc (which is a strictly circled set) making the estimation of the covering constant trivial. 
A potentially-viable alternative approach would be to search for an explicit orthogonal basis for $A^2(\Omega)$ or a related space which might allow the singular values to be estimated in a manner more directly analogous to the present work. In either event, higher-dimensional analogues of Theorem \ref{main} present an additional problem which is not present in the two-dimensional case.

\vspace{5mm}

\noindent \textbf{Acknowledgements.} Both authors were financially supported by the \emph{Leverhulme Trust} (Research Project Grant number RPG-2016-194).


\begin{thebibliography}{1}

\bibitem{bai} Z.-Q. Bai (2008) On the cycle expansion for the Lyapunov exponent of a product of random matrices, Journal of Physics A: Mathematical and Theoretical, 40, p.8315--8328.
\bibitem{birkhoff} Birkhoff, G. (1957). Extensions of Jentzsch's theorem. Transactions of the American Mathematical Society, 85(1), 219-227.
\bibitem{bj} Bandtlow, O. F. and Jenkinson, O. (2008). On the Ruelle eigenvalue sequence. Ergodic Theory and Dynamical Systems, 28(6), 1701-1711.
\bibitem{bj2} Bandtlow, O. F. and Jenkinson, O. (2008). Explicit eigenvalue estimates for transfer operators acting on spaces of holomorphic functions. Adv. Math. 218, 902?925.
\bibitem{cpv} Crisanti, A., Paladin, G. and Vulpiani, A. (1993) Products of random matrices in statistical physics. Springer, 1993, 166pp.
\bibitem{fk} Furstenberg, H., and Kesten, H. (1960). Products of random matrices. The Annals of Mathematical Statistics, 31(2), 457-469.
\bibitem{ggk} Gohberg, I., Goldberg, S., and Kaashoek, M. A. (2013). Classes of linear operators I (Vol. 63). Birkhäuser.
\bibitem{ggk2} Gohberg, I., Goldberg, S., and Krupnik, N. (2012). Traces and determinants of linear operators (Vol. 116). Birkhäuser.
\bibitem{hennion} Hennion, H. and Herv\'e, L. (2001). Limit theorems for Markov chains and stochastic properties of dynamical systems by quasi-compactness (Vol. 1766). Springer Science and Business Media.
\bibitem{jpv} Jenkinson, O., Pollicott, M., and Vytnova, P. (2018). Rigorous computation of diffusion coefficients for expanding maps. Journal of Statistical Physics, 170(2), 221-253.
\bibitem{kato} Kato, T. (2013). Perturbation theory for linear operators (Vol. 132). Springer Science and Business Media.
\bibitem{ki} Kingman, J. F. C. (1973). Subadditive ergodic theory. Annals of Probability, 1, p.883--909.
\bibitem{ma} Mainieri, R. (1992) Cycle expansion for the {L}yapunov exponent of a product of
              random matrices. Chaos, 2(1), 91--97.	
\bibitem{morris} Morris, I. D. (2018). Fast approximation of the affinity dimension for dominated affine iterated function systems. arXiv preprint arXiv:1807.09084.
\bibitem{peres} Peres, Y. (1992). Domains of analytic continuation for the top Lyapunov exponent. Ann. Inst. H. Poincaré Probab. Statist, 28(1), 131-148.
\bibitem{pol} Pollicott, M. (2010). Maximal Lyapunov exponents for random matrix products. Inventiones mathematicae, 181(1), 209-226.
\bibitem{pj} Jenkinson, O., and Pollicott, M. (2001). Computing the dimension of dynamically defined sets: $E_2$ and bounded continued fractions. Ergodic Theory and Dynamical Systems, 21(5), 1429-1445.
\bibitem{pv} Pollicott, M., and Vytnova, P. (2015, March). Estimating singularity dimension. In Mathematical Proceedings of the Cambridge Philosophical Society (Vol. 158, No. 2, pp. 223-238). Cambridge University Press.
\bibitem{ruelle} Ruelle, D. (1976). Zeta-functions for expanding maps and Anosov flows. Inventiones mathematicae, 34(3), 231-242.
\bibitem{bs} Simon, B. (2010). Trace ideals and their applications (No. 120). American Mathematical Soc.
\bibitem{tsbl} Tsitsiklis, J.N. and Blondel, V.D. (1997) The Lyapunov exponent and joint spectral radius of pairs of matrices are hard -- when not impossible -- to compute and to approximate. Math. Control Signals Systems 10 (1) p.31--40. 
\bibitem{shapiro} Shapiro, J. H. (2012). \emph{Composition operators: and classical function theory.} Springer Science and Business Media.


\end{thebibliography}
\end{document}